\documentclass[a4paper]{amsart}
\usepackage[utf8]{inputenc}
\usepackage[T1]{fontenc}
\usepackage{lmodern}
\usepackage{amsmath, amsthm, amssymb, amsfonts}
\usepackage[all]{xy}
\usepackage{nicefrac,mathtools}
\usepackage[shortlabels, inline]{enumitem}
\usepackage{microtype}
\usepackage{hyperref}
\usepackage{graphicx}
\usepackage[all]{xy}
\usepackage{xcolor}
\usepackage{tikz-cd}
\usepackage{thmtools}

\numberwithin{equation}{section}
\theoremstyle{plain}
\newtheorem{theorem}[equation]{Theorem}

\newtheorem{lemma}[equation]{Lemma}

\newtheorem{proposition}[equation]{Proposition}

\newtheorem{corollary}[equation]{Corollary}

\theoremstyle{definition}
\newtheorem{definition}[equation]{Definition}

\theoremstyle{remark} \newtheorem{remark}[equation]{Remark}
 \newtheorem*{remark*}{Remark}
\newtheorem*{remarks*}{Remark} 
\newtheorem{obs}[equation]{Observation}

\newcommand*{\Contc}{\mathrm{C_c}}
\newcommand*{\Cc}{\mathrm{C_c}}
 \newcommand{\base}[1][G]{{#1}^{(0)}}
\newcommand{\dd}{\mathrm{d}}

\newcommand{\id}{\textup{id}}
\newcommand{\supp}{{\textup{supp}}}

\newcommand*{\Hilm}[1][H]{\mathcal #1}
\newcommand{\Bound}{\mathbb B}
\newcommand*{\Cst}{\mathrm{C}^*}
\newcommand{\A}{\mathcal{A}}
\newcommand{\B}{\mathcal{B}}
\newcommand*{\inpro}[2]{\langle#1, #2\rangle}
\newcommand{\norm}[1]{\lvert\!\lvert #1\rvert\!\rvert}
\newcommand{\C}{\mathbb{C}}
\newcommand{\R}{\mathbb{R}}
\newcommand{\Z}{\mathbb{Z}}
\newcommand{\N}{\mathbb{N}}

\newcommand*{\nb}{\nobreakdash}
\newcommand*{\defeq}{\mathrel{\vcentcolon=}}
\newcommand{\etale}{{\'e}tale}
\renewcommand{\phi}{\varphi}

\title[Inclusions of Fell bundles \(\mathrm{C}^*\)-algebras]{Inclusions of Fell bundles \(\mathrm{C}^*\)-algebras and coaction crossed products}

\author{Md Amir Hossain} \email{mdamirhossain18@gmail.com}

\address{Theoretical Statistics and Mathematics Unit, Indian Statistical Institute, Delhi centre, 7, S. J. S. Sansanwal Marg, New Delhi 110016, India.
}

\keywords{Fell bundle over groupoid, topological grading, coaction crossed product, skew-product, twisted higher-rank graph algebra}
\thanks{\emph{2020 Mathematics Subject classifications.}  46L55, 46L05, 22A22}

\begin{document}
\begin{abstract}
 Let $p \colon \mathcal{A} \to G$ be a Fell bundle over a locally compact Hausdorff second countable groupoid $G$ equipped with a Haar system, and let $\Gamma$ be a discrete group. Given a continuous $1$-cocycle $c \colon G \to \Gamma$, we show that the $\mathrm{C}^*$-algebra of the restricted Fell bundle $\mathcal{A}|_{G_e}$ embeds isometrically into $\mathrm{C}^*(G;\mathcal{A})$, where $G_e = c^{-1}(e)$ is the clopen subgroupoid corresponding to the identity element.
 We exploit this embedding to show that $\mathrm{C}^*(G;\mathcal{A})$ admits a natural structure of a topologically graded $\mathrm{C}^*$-algebra in the sense of Exel. As a consequence, we obtain a canonical coaction $\delta$ of $\Gamma$ on $\mathrm{C}^*(G; \mathcal{A})$. We further show that the associated coaction crossed product $\mathrm{C}^*(G; \mathcal{A})\rtimes_\delta \Gamma$ is naturally isomorphic to the $\mathrm{C}^*$-algebra of a Fell bundle constructed from the cocycle data.
\end{abstract}

\maketitle

\section{Introduction}

Fell bundles over groupoids introduced by Kumjian~\cite{Kumjian1998Fell-bundles-over-gpd}, provide a unified framework for encoding a wide range of constructions in operator algebras, including twisted groupoid \(\Cst\)\nb-algebras, higher-rank graph \(\Cst\)\nb-algebras and \(\Cst\)-dynamical systems.

Let \(G\) be a locally compact Hausdorff second countable groupoid equipped with a Haar system, and let \(\Gamma\) be a discrete group. A continuous \(1\)-cocycle \(c \colon G \to \Gamma\) induces a decomposition \(G = \bigsqcup_{\gamma \in \Gamma} G_\gamma\), where \(G_\gamma = c^{-1}(\gamma)\). The identity fibre \(G_e\) plays a distinguished role: its \(\Cst\)\nb-algebra embeds canonically into \(\Cst(G)\) (\cite[Theorem 1.1]{Armstrong-Clark-an-Huef-Inclusion-graded-gpd}), and this inclusion realizes \(\Cst(G)\) as a topologically graded \(\Cst\)-algebra in the sense of Exel~\cite[Chapter 19]{Exel2017Book-Partial-action-Fell-bundle}.

In this paper, we extend this picture to Fell bundles. Let \(p \colon \A \to G\) be a Fell bundle and let \(c \colon G \to \Gamma\) be a continuous \(1\)\nb-cocycle. Restricting the Fell bundle \(\A\) to the fibres \(G_{\gamma}\) yields upper semicontinuous Banach bundles \(\A|_{G_\gamma}\), and in particular a Fell bundle \(\A|_{G_e}\) over \(G_e\).
Our first result shows that the natural inclusion of compactly supported sections extends to an isometric embedding \(
\Cst(G_e;\A|_{G_e}) \hookrightarrow \Cst(G;\A)\)
(see Theorem~\ref{thm-isometric-isomorphism}). The proof proceeds via the construction of a canonical \(\Cst\)\nb-correspondence from \(\Cst(G;\A)\) to \(\Cst(G_e;\A|_{G_e})\) ( see Theorem~\ref{lem-inner-prod-positive}), which allows us to identify the \(\Cst\)\nb-norms of sections supported in \(G_e\) inside both the algebras.

We then show that the subspaces \(\Cc(G_\gamma;\A|_{G_\gamma})\) determine a \(\Gamma\)\nb-grading of \(\Cst(G;\A)\). Moreover, this is a \emph{topological grading} in Exel’s sense: restriction to the clopen subgroupoid \(G_e\) defines a canonical conditional expectation onto \(\Cst(G_e;\A|_{G_e})\).

This topological grading enables us to define a coaction \(\delta\) of the discrete group \(\Gamma\) on \(\Cst(G;\A)\). We study the associated coaction crossed product and show that it admits a concrete realization in terms of a Fell bundle naturally associated to the cocycle data \(c\). More precisely, we prove that the coaction crossed product is isomorphic to the \(\Cst\)\nb-algebra of a \emph{pull-back Fell bundle} \(\phi^*\A\) over the \emph{skew-product groupoid} \(G(c)\), where 
\(\phi\colon G(c)\to G\) is a canonical groupoid homomorphism (see Theorem~\ref{thm-coaction-crossed-product}).

As an application, we consider twisted higher-rank graph \(\Cst\)\nb-algebras~\cite{Kumjian-Pask2000Higher-Rank-graph-cst-alg, Kumjian-Pask-Sims2015Twisted-Higher-rank-graph-alg}, which can be realized as Fell line bundles over path groupoids. We show that our construction yields a natural coaction on twisted higher-rank graph algebras and identify the corresponding coaction crossed product with the \(\Cst\)\nb-algebra of the twisted skew-product higher-rank graph (Proposition~\ref{prop-identfy-skew-prod-coaction}). This extends a result of Kaliszewski--Quigg--Raeburn~\cite[Theorem 2.4]{Kaliszewski-Quigg-Raeburn-2001-Skew-prod-cross-prod-by-coaction} to the setting of twisted higher-rank graphs.

\smallskip
\paragraph{\itshape Organization of the paper.}
Section~\ref{sec:Prelim} recalls background on Fell bundles, their \(\Cst\)\nb-algebras, and coactions of discrete groups. In Section~\ref{sec-Fell-bund-as-grad}, we prove the inclusion theorem and establish the topological grading. Section~\ref{sec-coaction} contains the construction of the coaction and the proof of the coaction crossed product identification. Section~\ref{sec-application} is devoted to applications to twisted higher-rank graph \(\Cst\)\nb-algebras.

\section{Preliminaries}\label{sec:Prelim}

In this section, we recall the necessary background on groupoids, Fell bundles over groupoids, their \(\Cst\)\nb-algebras, and crossed products by coactions of discrete groups. In this article, we shall consider Fell bundles that are \emph{saturated} and \emph{separable}.

Throughout this article, \(\Hilm\) denotes an infinite dimensional separable Hilbert space, \(\Bound(\Hilm)\) the algebra of bounded linear operators on \(\Hilm\), and, for a \(\Cst\)\nb-algebra \(A\), \(M(A)\) its multiplier algebra.

\subsection{Groupoids and their \(\Cst\)-algebras} 
A \emph{groupoid} is a small category \(G\) in which every element is invertible. The collection of units of \(G\) is denoted by \(\base\), while the set of all composable pairs is denoted by \(G^{(2)}\). The multiplication is given by a map \(G^{(2)} \to G\). Additionally, there are two maps associated with \(G\): the range map \(r\) and the source map \(s\), defined by \(r,s\colon G \to \base\) by \(r(g) = g g^{-1}\) and \(s(g) = g^{-1}g\). 
A groupoid is called \emph{locally compact Hausdorff} if \(G\) equipped with a locally compact Hausdorff topology such that the structure maps are continuous. A \emph{Haar system} on \(G\) is a fully supported left invariant continuous family of measures \(\lambda=\{\lambda^u\}_{u\in \base}\) along the range map \(r\) (see~\cite[Section I.2]{Renault1980A-gpd-appr-to-cst-alg}). The groupoid \(G\) is called \emph{\'etale} if the range (or equivalently, source) map is a local homeomorphism.

A continuous \(\mathbb{T}\)-valued \(2\)\nb-cocycle \(\sigma\) on \(G\) is a continuous map \(\sigma\colon G^{(2)} \to \mathbb{T}\) satisfying \(\sigma(r(\gamma), \gamma) = \sigma(\gamma, s(\gamma)) =1\) and \(
 \sigma(\alpha, \beta)\sigma(\alpha\beta, \gamma) = \sigma(\alpha, \beta\gamma)\sigma(\beta, \gamma) \)
for all composable triples \((\alpha, \beta, \gamma)\). The set of all continuous \(2\)\nb-cocycle is denoted by \(Z^2(G, \mathbb{T})\). Let \(\sigma\) be a continuous \(2\)\nb-cocycle on a locally compact Hausdorff groupoid~\(G\) equipped with a Haar system \(\lambda\). Then the space of all compactly supported continuous function on \(G\) is denoted by \(\Cc(G, \sigma)\) and forms a \(^*\)\nb-algebra with respect to the operations 
\[
 \xi*\eta(g) = \int_{G} \xi(h)\eta(h^{-1}g) \sigma(h,h^{-1}g) \dd \lambda^{r(g)}(h)\quad \textup{and} \quad \xi^*(g) = \overline{\sigma(g,g^{-1})\xi(g)}
\] 
for \(\xi, \eta\in \Cc(G, \sigma)\). The \(I\)\nb-norm on \(\Cc(G,\sigma)\) is given by 
\[
\norm{\xi}_I = \max \Big\{\sup_{u\in \base}\int_{r^{-1}(u)}|\xi(g)| \dd\lambda^u(g), \sup_{u\in \base}\int_{s^{-1}(u)}|\xi(g)| \dd\lambda^u(g)\Big\}.
\]
A representation \(\pi\colon \Cc(G, \sigma) \to \Bound(\Hilm)\) is called \(I\)\nb-bounded if \(\norm{\pi(\xi)}\leq \norm{\xi}_I\) for all \(\xi \in \Cc(G,\sigma)\).
The \emph{twisted groupoid \(\Cst\)-algebra} \(\Cst(G,\sigma)\) is the completion of \(\Cc(G, \sigma)\) with respect to the \emph{universal norm}
\[
\norm{\xi}=\sup\big\{\norm{\pi(\xi)} : \pi\textup{ is a \(I\)-norm bounded representation of }\Cc(G, \sigma)\big\}.
\]

\subsection{Fell bundle \(\Cst\)-algebras}
Our reference for Fell bundles and upper semicontinuous bundle of Banach spaces are~\cite{Fell1988Representation-of-Star-Alg-Banach-bundles},~\cite{Muhly-Williams2008Equivalence-and-disinte-thm-Fell-bundle} and~\cite{Kumjian1998Fell-bundles-over-gpd}.		
An upper semicontinuous Banach bundle over a topological space \(X\) is a topological space \(\A\) with a continuous bundle map \(p\colon \A\to X\) such that each fibre \(\A_x\defeq p^{-1}(x)\) is a Banach space and the map \(a\mapsto \norm{a} \) from \(\A \to \R\) is an upper semicontinuous function. Let \(G\) be a locally compact groupoid and \( p\colon \mathcal{A} \to G \) an upper semicontinuous Banach bundle over \( G \). We define 
\[
\mathcal{A}^{(2)} = \big\{ (a, b) \in \mathcal{A} \times \mathcal{A} : (s(p(a)), r(p(b))) \in G^{(2)} \big\}.
\]

\begin{definition}[Fell bundle~\cite{Muhly-Williams2008Equivalence-and-disinte-thm-Fell-bundle}]
	\label{def:fell-bundle}
	A \emph{Fell bundle} over a groupoid~\(G\) is an upper
	semicontinuous bundle of Banach spaces \(p\colon \A\to G\) equipped
	with a continuous `multiplication' map \( \A^{(2)}\to \A, (a,b) \mapsto ab\)
	and an `involution' map \(\A\to \A, a\mapsto a^*\) for \(a\in \A\)
	which satisfy the following axioms:
	\begin{enumerate}[leftmargin = *]
		\item\label{item:Fell-1} \(p(ab) =p(a)p(b)\) for all
		\((a,b) \in \A^{(2)}\);
		\item\label{item:Fell-2} \(p(a^*)=p(a)^{-1}\) for all \(a\in \A\);
		
		\item \label{item:Fell-4} for each \(u\in \base\), \(\A_u\) is a \(\Cst\)\nb-algebra with respect to the inherited
		multiplication and involution on \(\A_u\);
		
		\item \label{item:Fell-5} for \(\gamma\in G\), \(\A_\gamma\) is a \(\A_{r(\gamma)}\)-\(\A_{s(\gamma)}\)-imprimitivity bimodule equipped
		with the inherited actions and inner products given by
		\[
		{}_{\A_{r(\gamma)}}\inpro{a}{b}\defeq ab^* \quad \textup{and} \quad \inpro{a}{b}_{\A_{s(\gamma)}} \defeq a^*b.
		\]

	\end{enumerate}
\end{definition}

For the rest of this paper, we fix a Fell bundle \(p\colon \A \to G\) over a locally compact, Hausdorff, second countable  groupoid~\(G\) equipped with a Haar system \(\lambda\). Let \(\Cc(G;\A)\) be the space of all compactly supported continuous sections of the bundle \(p\colon \A \to G\). Define \(\Cc(G;\A)\) as a \(^*\)\nb-algebra using the following convolution and involution operations:
\[
 \xi*\eta(g) = \int_{G} \xi(h)\eta(h^{-1}g) \dd \lambda^{r(g)}(h) \quad \text{and} \quad \xi^*(g) = \xi(g^{-1})^*
\]
where \(\xi, \eta \in \Contc(G;\A)\).
The \(I\)\nb-norm on \(\Cc(G;\A)\) is given by 
\[
\norm{\xi}_I = \max \Big\{\sup_{u\in \base}\int_{r^{-1}(u)}\|\xi(g) \| \dd\lambda^u(g), \sup_{u\in \base}\int_{s^{-1}(u)}\|\xi(g)\| \dd\lambda^u(g)\Big\}.
\]

The \emph{Fell bundle \(\Cst\)-algebra}, denoted by \(\Cst(G,\A)\), is the completion of \(\Cc(G;\A)\) with respect to the \emph{universal norm}
\[
\norm{\xi}=\sup\{\norm{\pi(\xi)} : \pi\textup{ is a \(I\)-norm bounded representation of }\Cc(G;\A)\}.
\]
\begin{obs}\label{obs-I-norm}
Using a similar argument as~\cite[Ex 1.4.4 on P.355]{Williams2019A-toolkit-for-gpd-alg} one concludes that \(I\)\nb-norm is sub-multiplicative for Fell bundles over groupoids.
\end{obs}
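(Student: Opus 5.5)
We prove \(\norm{\xi*\eta}_I\le\norm{\xi}_I\,\norm{\eta}_I\) for \(\xi,\eta\in\Cc(G;\A)\) by copying the scalar argument of~\cite[Ex 1.4.4]{Williams2019A-toolkit-for-gpd-alg} and replacing absolute values by fibre norms. We treat the two suprema in the definition of the \(I\)\nb-norm separately. For the first (range) part, the one new ingredient is the pointwise bound
\[
\norm{\xi*\eta(g)}\le \int_G \norm{\xi(h)}\,\norm{\eta(h^{-1}g)}\,\dd\lambda^{r(g)}(h).
\]
This uses two facts. First, \(\norm{ab}\le\norm{a}\,\norm{b}\) for \((a,b)\in\A^{(2)}\), which follows from the imprimitivity bimodule structure in the Fell bundle axioms (the norm on \(\A_\gamma\) is the \(\Cst\)-module norm, so \(\norm{ab}^2=\norm{b^*a^*ab}\le \norm{a}^2\norm{b}^2\)). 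Second, the norm of a vector-valued integral is at most the integral of the norms. The integrand \(h\mapsto\xi(h)\eta(h^{-1}g)\) is a continuous compactly supported section of the bundle restricted to \(r^{-1}(r(g))\), and the integral is the usual vector-valued integral in \(\A_g\).

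Next we fix \(u\in\base\) and integrate over \(g\in r^{-1}(u)\). Fubini applies because all functions involved are compactly supported and \(\norm{\xi(\cdot)}\) is upper semicontinuous, hence Borel and bounded. This gives
\[
\int_{G}\norm{\xi*\eta(g)}\,\dd\lambda^u(g)\le \int_G \norm{\xi(h)}\int_G\norm{\eta(h^{-1}g)}\,\dd\lambda^u(g)\,\dd\lambda^u(h).
\]
By left invariance of the Haar system, the inner integral equals \(\int\norm{\eta(k)}\,\dd\lambda^{s(h)}(k)\le\norm{\eta}_I\). The outer integral is then at most \(\norm{\xi}_I\).

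For the second (source) part, we could repeat the computation with the measures \(\lambda_u\), but it is cleaner to reduce to the first part. Since \(\norm{a^*}=\norm{a}\), we have \(\norm{\xi^*}_I=\norm{\xi}_I\): the involution swaps the two suprema, via inversion \(g\mapsto g^{-1}\), which carries \(\lambda^u\) to \(\lambda_u\). Moreover, \((\xi*\eta)^*=\eta^**\xi^*\). Hence the source supremum for \(\xi*\eta\) equals the range supremum for \(\eta^**\xi^*\), which by the first part is at most \(\norm{\eta^*}_I\,\norm{\xi^*}_I=\norm{\xi}_I\,\norm{\eta}_I\).

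The main obstacle is measure-theoretic rather than algebraic. We must justify that \(g\mapsto\norm{\xi*\eta(g)}\) and the joint integrand \((h,g)\mapsto\norm{\xi(h)}\norm{\eta(h^{-1}g)}\) are measurable on \(r^{-1}(u)\times r^{-1}(u)\), so that Fubini–Tonelli applies. Upper semicontinuity of the norm on the bundle gives Borel measurability, and compact support gives integrability. With this in hand the remaining steps are routine.
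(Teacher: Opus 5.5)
Your proposal is correct and follows essentially the argument the paper intends. The paper gives no proof beyond pointing to the scalar computation in Williams' exercise, and you carry out that computation with \(|\cdot|\) replaced by fibre norms, using \(\norm{ab}\le\norm{a}\norm{b}\), Tonelli and left invariance; the source half comes from the range half via \(\norm{a^*}=\norm{a}\) and \((\xi*\eta)^*=\eta^**\xi^*\). One wording slip: the integrand \(h\mapsto\xi(h)\eta(h^{-1}g)\) is not a section over \(r^{-1}(r(g))\) but a continuous compactly supported map from \(r^{-1}(r(g))\) into the single Banach space \(\A_g\) (norm-continuous because on a fibre of an upper semicontinuous Banach bundle the relative topology is the norm topology), which is what you then integrate.
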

\begin{remark}\label{rmk-pull-back}
Let \(G, H\) be two groupoids and \(p\colon \A \to H\) a Fell bundle over \(H\). Suppose \(\phi\colon G \to H\) is a groupoid homomorphism. Then the \emph{pull-back} 
\[
 \phi^*\A =\{(g,a)\in G\times \A : \phi(g) =p(a) \}
\]
is a Fell bundle over \(G\) with respect to the operations \((g,a)(h,b) = (gh,ab)\) and \((g,a)^* =(g^{-1},a^*)\) (see~\cite[Lemma 1.1]{Kaliszewski-Muhly-Quigg-Williams-2010-Coactions-and-Fell-bundles}). Note that the fibre at \(g\in G\) can be identified with \(\A_{\phi(g)}\).
\end{remark}

\subsection{Coaction crossed products}\label{sec-cocation-dis}
We refer the reader to~\cite{Echterhoff-Kaliszewski-Quigg-Raeburn-book-categorical-appro-to-imprimitivity-thm},~\cite{ Quigg-1996-Discrete-C-st-coaction-and-C-st-bundles},~\cite{ Echterhoff-Quigg-Maximal-cocation} for a details treatment on coactions of groups.
Let \(\Gamma\) be a discrete group.
The comultiplication of \(\Cst(\Gamma)\) is the homomorphism
\[
\delta_{\Gamma}\colon \Cst(\Gamma) \to \Cst(\Gamma)\otimes \Cst(\Gamma)
\]  
given by the integrated form of the unitary homomorphism from \(\Gamma \to \Cst(\Gamma)\otimes \Cst(\Gamma)\) by \(\gamma\mapsto \gamma\otimes \gamma\). 
	Let \(A\) be a \(\Cst\)-algebra. A coaction of \(\Gamma\) on \(A\) is an injective homomorphism \(\delta \colon A \to M(A\otimes\Cst(\Gamma))\) satisfying coaction identity 
	\[
	(\delta \otimes \mathrm{id}) \circ \delta = (\mathrm{id}\otimes \delta_{\Gamma}) \circ \delta
	\] 
	together with the nondegeneracy condition 
	\[
	\overline{\textup{span}}\{\delta(A)\cdot (1\otimes \Cst(\Gamma))\} = A\otimes \Cst(\Gamma).
	\]
Note that the above nondegeneracy condition implies that \(\delta\) is nondegenerate as a map from \(A\to M(A\otimes \Cst(\Gamma))\).

A covariant representation of a coaction \((A, \Gamma, \delta)\) in \(M(B)\) is a pair \((\pi, \mu)\) consisting of nondegenerate homomorphisms \(\pi\colon A\to M(B)\) and \(\mu\colon \Gamma\to M(B)\) satisfies the covariance condition, that is, the diagram is commute
\[
\begin{tikzcd}[
	column sep=3cm,
	row sep=normal
	]
	A \arrow[r, "\delta"] \arrow[d, "\pi"'] 
	& M\bigl(A \otimes \Cst(\Gamma)\bigr) \arrow[d, "\pi \otimes \mathrm{id}"] \\
	M(B) \arrow[r, "\mathrm{Ad}(\mu \otimes \mathrm{id})(w_\Gamma) \circ (\cdot \otimes 1)"'] 
	& M\bigl(B \otimes \Cst(\Gamma)\bigr).
\end{tikzcd}
\]

A coaction crossed product \(A\rtimes_{\delta}\Gamma\) is the universal \(\Cst\)-algebra generated by the \emph{universal covariant representation} \((j_A, j_{\Gamma})\) of \((A, \Gamma,\delta)\), that is, for any covariant representation \((\pi, \mu)\) on \(M(B)\) there is a unique nondegenerate homomorphism \(\pi\times \mu \colon A\rtimes_{\delta} \Gamma \to M(B)\) such that \((\pi\times \mu)\circ j_A= \pi\) and \((\pi\times \mu)\circ j_{\Gamma}= \mu\). And 
\[
A\rtimes_{\delta}\Gamma = \overline{\textup{span}}\{j_A(a)j_{\Gamma}(1_h): a\in A, h\in \Gamma\}
\]
where \(1_h\) is the characteristic function of \(\{h\}\).
\section{Fell bundles \(\Cst\)-algebras as topological graded algebras}\label{sec-Fell-bund-as-grad}
 
In this section, we prove under certain  hypothesis on the underlying  groupoid of a Fell bundle, the \(\Cst\)\nb-algebra of the Fell bundle can be realized as a topologically graded \(\Cst\)\nb-algebra. We shall use this graded structure in Section~\ref{sec-coaction} to obtain a coaction on the Fell bundle algebra \(\Cst(G;\A)\). Most of the techniques used in the current section are motivated by~\cite{Armstrong-Clark-an-Huef-Inclusion-graded-gpd}.

Let \(G\) be a locally compact Hausdorff second countable groupoid with a Haar system \(\lambda\) and let \(\Gamma\) be a discrete group with a continuous \(1\)\nb-cocycle \(c\colon G \to \Gamma\). For \(\gamma \in \Gamma\), set \(G_{\gamma} \defeq c^{-1}(\gamma)\). Then \(G_{e}\) is a clopen subgroupoid of \(G\), where \(e\) is the identity element of \(\Gamma\). We equip \(G_e\) with the Haar system obtained by restricting \(\lambda\); that is, \(\{\lambda^{u}|_{G_e} : u\in \base\}\).
For simplicity, we continue to denote the same notation \(\lambda^u\) for \(\lambda^{u}|_{G_{e}}\).

 Consider a Fell bundle \(p\colon \A\to G\) over~\(G\). Let \(\xi \in \Cc(G;\A)\), then \(\supp(\xi) \subseteq \bigcup_{\gamma \in \Gamma} c^{-1}(\gamma)\). Since \(\supp(\xi)\) is a compact subset of \(G\), there exists a finite subset \(F\subseteq \Gamma\) such that \(\supp(\xi) \subseteq \bigcup_{\gamma \in F} c^{-1}(\gamma)\). We define \(\xi_{\gamma}\defeq \xi|_{c^{-1}(\gamma)}\) for \(\gamma\in \Gamma\). Then \(\xi = \sum_{\gamma\in \Gamma} \xi_{\gamma}\). Since \(\xi_{\gamma} =0\) if \(\gamma \notin F\), the sum survives only for a finitely many \(\gamma\). As \(c^{-1}(\gamma)\) is a clopen subset of \(G\), for \(\gamma\in \Gamma\), we can extend \(\xi_{\gamma}\) on \(\Cc(G;\A)\) by declaring zero outside \(G_{\gamma}\); we denote this extension by \(\widetilde{\xi_{\gamma}}\). 

For \(\xi, \eta\in \Cc(G;\A)\) and \(g\in \Cc(G_{e}; \A|_{G_e})\), we defined
\begin{align}
\label{eq:right-action}  \xi*_Rg(x) &\defeq \int_{G_e} \xi(xt)g(t^{-1}) \dd \lambda^{s(x)}(t)  \\
\label{eq:left-action}	\xi*_L\eta (y) &\defeq \xi*\eta (y) = \int_{G} \xi(x)\eta(x^{-1}y)\dd\lambda^{r(y)}(x) \\
\label{eq:inner-prod}	\inpro{\xi}{\eta} (x) &\defeq \sum_{\gamma \in \Gamma} \xi^*_{\gamma} \bullet \eta_{\gamma}(x)
\end{align}
where \(x\in G_e, y\in G\) and 
\[
\xi^*_{\gamma} \bullet \eta_{\gamma}(x) = \int_{G} \widetilde{\xi_{\gamma}}^*(z) \widetilde{\eta_{\gamma}}(z^{-1}x) \dd \lambda^{r(x)}(z).
\]
 
 We will show that Equations~\eqref{eq:right-action} and~\eqref{eq:left-action}
 make \(\Cc(G;\A)\) a bimodule over \(\Cc(G;\A)\) and
 \(\Cc(G_e;\A|_{G_e})\), respectively; and that Equation~\eqref{eq:inner-prod} endows \(\Cc(G;\A)\) with the structure of a pre-Hilbert 
 module over \(\Cc(G_e;\A|_{G_e})\), carrying a left action of \(\Cc(G;\A)\) given by Equation~\eqref{eq:left-action}. Finally, we complete the bimodule to obtain a \(\Cst\)\nb-correspondence from \(\Cst(G;\A)\to \Cst(G_e;\A|_{G_e})\). Before proving this we have the following observation, which will be useful in subsequent computations. 
 \begin{obs}\label{obs-corr-1}
 	Let \(\xi,\eta\in \Cc(G;\A)\) and \(g\in \Cc(G_e; \A|_{G_e})\). Then, we have
 	\begin{enumerate}
 		\item \((\eta*_Rg)_{\gamma} = \eta_{\gamma}*_Rg \);
 		\item  \(\xi^*_{\gamma}\bullet (\eta*_Rg)_{\gamma} = (\xi^*_\gamma\bullet\eta_{\gamma})*_R g\)
 		\end{enumerate}
 		for \(\gamma \in \Gamma\).
 \end{obs}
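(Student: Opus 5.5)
Both parts are direct computations with the defining integrals, using only that \(c\) is a cocycle and that each \(G_\gamma\) is clopen. The key point is that right multiplication by elements of \(G_e\) preserves every fibre \(G_\gamma\): if \(t\in G_e\) and \((x,t)\) is composable, then \(c(xt)=c(x)c(t)=c(x)\). I will regard \(\eta_\gamma\) as the element \(\widetilde{\eta_\gamma}\in\Cc(G;\A)\), so that \(\eta_\gamma *_R g\) makes sense via Equation~\eqref{eq:right-action}. I will also use that \(\widetilde{\xi_\gamma}^*\) is supported in \(G_{\gamma^{-1}}\), since \(c(z^{-1})=c(z)^{-1}\).

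For (1), fix \(x\in G\) and compute
\[
(\eta*_R g)(x)=\int_{G_e}\eta(xt)\,g(t^{-1})\,\dd\lambda^{s(x)}(t).
\]
For \(t\in G_e\) with \(r(t)=s(x)\) we have \(xt\in G_{c(x)}\), so \(\eta(xt)=\widetilde{\eta_{c(x)}}(xt)\). Hence \((\eta*_Rg)(x)=(\widetilde{\eta_{c(x)}}*_R g)(x)\). Moreover, \(\widetilde{\eta_\gamma}*_R g\) vanishes off \(G_\gamma\): if \(c(x)\neq\gamma\), then \(xt\notin G_\gamma\) for every \(t\in G_e\). Restricting to \(G_\gamma\) therefore gives \((\eta*_Rg)_\gamma=\eta_\gamma*_Rg\). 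Continuity and compact support of the integrands are inherited from \(\eta\) and \(g\), and this is standard.

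For (2), fix \(x\in G_e\). By (1), the left-hand side is
\[
\int_G \widetilde{\xi_\gamma}^*(z)\int_{G_e}\widetilde{\eta_\gamma}(z^{-1}xt)\,g(t^{-1})\,\dd\lambda^{s(x)}(t)\,\dd\lambda^{r(x)}(z).
\]
Here I use \(s(z^{-1}x)=s(x)\). The right-hand side is
\[
\int_{G_e}\Big(\int_G \widetilde{\xi_\gamma}^*(z)\,\widetilde{\eta_\gamma}(z^{-1}xt)\,\dd\lambda^{r(xt)}(z)\Big)g(t^{-1})\,\dd\lambda^{s(x)}(t),
\]
and \(r(xt)=r(x)\). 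Both integrands are continuous and compactly supported on the relevant compact sets. The Banach-bundle-valued version of Fubini's theorem, applied as in the standard convolution computations for \(\Cc(G;\A)\), lets me interchange the integrals. This gives the identity, with the Fell bundle multiplication associating inside the integrand.

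The only delicate points are the bookkeeping of ranges and sources in the Haar measures and the justification of Fubini for \(\A\)-valued sections. For the latter I would first pair with continuous linear functionals, or pass to a faithful representation, to reduce to the scalar case.
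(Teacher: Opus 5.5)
Your proposal is correct and follows the same route as the paper: part (1) rests on the cocycle identity \(c(xt)=c(x)\) for \(t\in G_e\), and part (2) unfolds both sides using (1) and then interchanges the two integrals by Fubini. Your additional remarks only make explicit what the paper leaves implicit. These are that \(\widetilde{\eta_\gamma}*_Rg\) vanishes off \(G_\gamma\) (so the zero-extensions agree, which the paper uses tacitly in (2)), that \(x\) should be taken in \(G_e\) rather than \(G_\gamma\), and that the bundle-valued Fubini step reduces to the scalar case via functionals.
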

 \begin{proof}
 \noindent (1).	For \(x\in G_{\gamma}\), we have 
 	\[
 	 (\eta*_Rg)_{\gamma}(x) = (\eta*_Rg)|_{c^{-1}(\gamma)} (x) = \int_{G_e} \eta(xt)g(t^{-1}) \dd \lambda^{s(x)}(t).
 	\]
Since \(c\) is a \(1\)\nb-cocycle, \(xt\in c^{-1}(\gamma)\) for \(t\in G_e\). Thus the last term can be written as 
\[
 \int_{G_e} \eta|_{c^{-1}(\gamma)}(xt)g(t^{-1}) \dd \lambda^{s(x)}(t) = \eta_{\gamma}*_Rg(x).
 \]

 \noindent (2).
 Let \(x\in G_{\gamma}\). Now using Part~(1), we get 
 \begin{multline*}
  \xi^*_{\gamma}\bullet (\eta*_Rg)_{\gamma}(x) = \xi^*_{\gamma}\bullet (\eta_{\gamma}*_Rg)(x) = \int_{G} \widetilde{\xi_{\gamma}}^*(z)\widetilde{(\eta_{\gamma}*_Rg)}(z^{-1}x) \dd \lambda^{r(x)}(z) \\ 
 = \int_{G} \widetilde{\xi_{\gamma}}^*(z) \int_{G_e} \widetilde{\eta_{\gamma}}(z^{-1}xt)g(t^{-1}) \dd \lambda^{s(x)}(t) \dd\lambda^{r(x)}(z).
  \end{multline*}
Using Fubini theorem the last term can be written as 
\begin{multline*}
\int_{G_e}\int_{G} \widetilde{\xi_{\gamma}}^*(z)\widetilde{\eta_{\gamma}}(z^{-1}xt) g(t^{-1})  \dd\lambda^{r(x)}(z) \dd \lambda^{s(x)}(t) \\
= \int_{G_e} (\xi^*_{\gamma}\bullet\eta_{\gamma})(xt)g(t^{-1}) \dd \lambda^{s(x)}(t) = (\xi^*_\gamma\bullet\eta_{\gamma})*_R g(x).
\end{multline*}
 \end{proof}

\begin{lemma}\label{lem-bi-module}
For \(\xi, \eta, \zeta \in \Cc(G;\A)\) and \(g,h\in \Cc(G_e;\A|_{G_e})\), we have 
\begin{align}
	(\xi+\eta)*_Rg &= \xi*_Rg + \eta*_R g 	\label{equ-corr-1}\\
	\xi*_R(g+h) &= \xi*_Rg + \xi*_Rh 	\label{equ-corr-2}\\
	\xi*_R(g*h) &= (\xi*_Rg)*_Rh 	\label{equ-corr-3}\\
	\xi*_L(\eta +\zeta) &= \xi*_L\eta + \xi*_L\zeta 	\label{equ-corr-4}\\
	(\xi+\eta)*_L\zeta &= \xi*_L\eta + \xi*_L \zeta 	\label{equ-corr-5}\\
	(\xi*\eta)*_L\zeta &= \xi*_L(\eta*_L\zeta) 	\label{equ-corr-6}\\
	(\xi*_L\eta)*_Rg &=\xi*_L(\eta*_Rg) 	\label{equ-corr-7}\\
	\inpro{\xi}{\eta+\zeta} &= \inpro{\xi}{\eta} +\inpro{\xi}{\zeta} 	\label{equ-corr-8}\\
	\inpro{\xi}{\eta*_Rg} &=\inpro{\xi}{\eta}*g 	\label{equ-corr-9} \\
	\inpro{\xi}{\eta}^* &= \inpro{\eta}{\xi} 	\label{equ-corr-10}\\
	\inpro{\xi*_L\eta}{\zeta} &= \inpro{\eta}{\xi^**_L\zeta}. 	\label{equ-corr-11}
	\end{align}
\end{lemma}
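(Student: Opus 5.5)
The identities split into three groups, and I will handle each by direct computation with the integral formulas \eqref{eq:right-action}--\eqref{eq:inner-prod}. Throughout I use four tools: Fubini's theorem (all integrands are continuous and compactly supported, and \(\lambda\) is a Haar system), left invariance of \(\lambda\), the fact that \(c\) is a \(1\)\nb-cocycle, and the fact that \(G_e\) is a clopen subgroupoid.

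\textbf{Linearity identities.} These are \eqref{equ-corr-1}, \eqref{equ-corr-2}, \eqref{equ-corr-4}, \eqref{equ-corr-5} and \eqref{equ-corr-8}. They follow at once from linearity of the integral and of multiplication in \(\A\), together with \((\eta+\zeta)_\gamma=\eta_\gamma+\zeta_\gamma\).

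\textbf{Associativity identities.} These are \eqref{equ-corr-3}, \eqref{equ-corr-6} and \eqref{equ-corr-7}.
\begin{itemize}
\item Identity \eqref{equ-corr-6} is associativity of convolution in \(\Cc(G;\A)\).
\item For \eqref{equ-corr-3}, expand
\[
(\xi*_Rg)*_Rh(x)=\int_{G_e}\int_{G_e}\xi(xts)g(s^{-1})h(t^{-1})\,\dd\lambda^{r(t)}(s)\,\dd\lambda^{s(x)}(t).
\]
Substitute \(s\mapsto t^{-1}s\), which is justified by left invariance of \(\lambda\) restricted to the subgroupoid \(G_e\). Then apply Fubini. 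The result is \(\int_{G_e}\xi(xs)\,(g*h)(s^{-1})\,\dd\lambda^{s(x)}(s)\), using
\[
g*h(s^{-1})=\int_{G_e} g(s^{-1}t)h(t^{-1})\,\dd\lambda^{r(s^{-1})}(t).
\]
The Fell bundle multiplication is associative, so the factors regroup as needed.
\item Identity \eqref{equ-corr-7} is handled the same way. Write \((\xi*\eta)*_Rg(y)\) as a double integral over \(x\in G\) and \(t\in G_e\), swap the order of integration, and recognise \(\xi*(\eta*_Rg)(y)\).
\end{itemize}

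\textbf{Inner product identities.} These are \eqref{equ-corr-9}, \eqref{equ-corr-10} and \eqref{equ-corr-11}.
\begin{itemize}
\item For \eqref{equ-corr-9}, sum Observation~\ref{obs-corr-1}(2) over \(\gamma\). This gives \(\inpro{\xi}{\eta*_Rg}=\inpro{\xi}{\eta}*_R g\). It remains to note that \(*_R\) restricted to \(\Cc(G_e;\A|_{G_e})\) agrees with convolution in \(\Cc(G_e;\A|_{G_e})\); this follows by the substitution \(t\mapsto x^{-1}t\) and left invariance.
\item For \eqref{equ-corr-10}, note that \(\xi^*_\gamma\bullet\eta_\gamma\) is the restriction to \(G_e\) of the convolution \(\widetilde{\xi_\gamma}^**\widetilde{\eta_\gamma}\). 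Then
\[
(\xi_\gamma^*\bullet\eta_\gamma)^*=\eta_\gamma^*\bullet\xi_\gamma,
\]
because the involution on \(\Cc(G;\A)\) is anti-multiplicative and restriction to \(G_e\) commutes with \(^*\). Summing over \(\gamma\) gives the claim.
\end{itemize}

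\textbf{Main obstacle.} The key step, and the one I expect to be hardest, is \eqref{equ-corr-11}, because the left action mixes graded components. For \(x\in G_e\), the point is that
\[
\inpro{\xi}{\eta}(x)=\sum_\gamma\int_{G_\gamma}\xi(z^{-1})^*\,\eta(z^{-1}x)\,\dd\lambda^{r(x)}(z).
\]
The cocycle forces \(z^{-1}x\in G_\gamma\) exactly when \(z\in G_{\gamma^{-1}}\) (up to the ordering convention), and the sum over \(\gamma\) reassembles into one integral over all of \(G\). So
\[
\inpro{\xi}{\eta}=(\xi^**\eta)|_{G_e}.
\]
This identity holds because \(\widetilde{\xi_\gamma}^**\widetilde{\eta_\beta}\) vanishes on \(G_e\) unless \(\beta=\gamma\), which is checked directly from the cocycle property. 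With this description, \eqref{equ-corr-11} reduces to
\[
((\xi*\eta)^**\zeta)|_{G_e}=(\eta^**(\xi^**\zeta))|_{G_e},
\]
which is immediate from associativity of convolution and anti-multiplicativity of \(^*\) in \(\Cc(G;\A)\). The same description also gives a second, quicker proof of \eqref{equ-corr-9} and \eqref{equ-corr-10}. I would therefore establish the restriction formula first and derive the remaining inner product identities from it.
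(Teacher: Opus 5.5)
Your proof is correct. For the linearity and associativity identities and for \eqref{equ-corr-9} you follow the paper: substitution plus Fubini, and Observation~\ref{obs-corr-1}(2) summed over \(\gamma\). For \eqref{equ-corr-10} and \eqref{equ-corr-11}, however, your route is genuinely different.

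The paper checks both identities by direct manipulation of the defining integrals. For \eqref{equ-corr-11} it first assumes \(\supp(\xi)\subseteq G_\delta\), carries the reindexing \(\gamma\mapsto\delta\gamma\) through several changes of variables and a Fubini swap, and then passes to general \(\xi\) by decomposing it into homogeneous pieces.

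You instead isolate one structural fact, \(\inpro{\xi}{\eta}=(\xi^**\eta)|_{G_e}\). It needs only that \(\widetilde{\xi_\gamma}^**\widetilde{\eta_\beta}\) is supported in \(G_{\gamma^{-1}\beta}\). After that, \eqref{equ-corr-10} and \eqref{equ-corr-11} follow formally from associativity and anti-multiplicativity of \(^*\) in \(\Cc(G;\A)\), together with the fact that restriction to the inverse-closed set \(G_e\) commutes with \(^*\). No reduction to homogeneous \(\xi\) is needed.

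Your version is shorter and less error-prone. It also shows conceptually that the inner product is \(E(\xi^**\eta)\), where \(E\) is the restriction map that becomes the conditional expectation in Proposition~\ref{prop-linear-cont}. The paper's version stays entirely at the level of the defining integrals, at the cost of length.

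You could push your idea one step further. Substituting \(z=xt\) in the convolution shows \(\xi*_Rg=\xi*\iota(g)\). Then \eqref{equ-corr-3}, \eqref{equ-corr-7} and \eqref{equ-corr-9} also reduce to associativity of convolution, using that \(\iota\) is multiplicative and that \((F*\iota(g))|_{G_e}=F|_{G_e}*g\).

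There are two harmless index slips to fix:
\begin{enumerate}
\item In your expansion for \eqref{equ-corr-3}, the inner measure should be \(\lambda^{s(t)}\), since \(s(xt)=s(t)\).
\item In your formula for \(g*h(s^{-1})\), the measure should be \(\lambda^{r(s)}\), not \(\lambda^{r(s^{-1})}\).
\end{enumerate}
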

\begin{proof}
 Equations~\eqref{equ-corr-1}, \eqref{equ-corr-2}, \eqref{equ-corr-4}, \eqref{equ-corr-5} and~\eqref{equ-corr-8} follows from the definition. Equation~\eqref{equ-corr-7} follows from the associativity of the convolution.

The proof of Equations~\eqref{equ-corr-3} and~\eqref{equ-corr-6} are similar. Therefore, we only prove Equation~\eqref{equ-corr-3}.
\begin{align*}
\xi*_R(g*h)(x) &= \int_{G_e} \xi(xt)(g*h)(t^{-1}) \dd \lambda^{s(x)}(t) \\
&= \int_{G_e} \xi(xt) \int_{G_e} g(y)h(y^{-1}t^{-1}) \dd \lambda^{s(t)}(y) \dd \lambda^{s(x)}(t) \\
&= \int_{G_e} \int_{G_e} \xi(xt) g(y)h(y^{-1}t^{-1}) \dd \lambda^{s(t)}(y) \dd \lambda^{s(x)}(t).
\end{align*}
Using the change of variables \(ty\mapsto z\), the last term above becomes
\[
 \int_{G_e} \int_{G_e} \xi(xt)g(t^{-1}z) h(z^{-1}) \dd\lambda^{s(x)}(z) \dd \lambda^{s(x)}(t).
\]
We now use Fubini theorem to rewrite the last term as 
\[
 \int_{G_e} \int_{G_e} \xi(xt)g(t^{-1}z) h(z^{-1}) \dd\lambda^{s(x)}(t) \dd \lambda^{s(x)}(z).
\]
Using the change of variables \(t\mapsto zt\), we obtain
\begin{multline*}
\int_{G_e} \int_{G_e} \xi(xzt)g(t^{-1})g(t^{-1}) h(z^{-1}) \dd \lambda^{s(z)}(t) \dd\lambda^{s(x)}(z)\\
= \int_{G_e} (\xi*_Rg) (xz)h(z^{-1}) \dd \lambda^{s(x)}(z) = (\xi*_Rg)*_Rh(x).
\end{multline*} 

\noindent For Equation~\eqref{equ-corr-9},
\begin{align*}
\inpro{\xi}{\eta*_Rg}(x) &= \sum_{\gamma\in \Gamma} \xi^*_{\gamma}\bullet (\eta*_Rg)_{\gamma}(x) = \sum_{\gamma\in \Gamma} (\xi^*_{\gamma}\bullet \eta_{\gamma})*_Rg(x)\\
&= \sum_{\gamma\in \Gamma} (\xi^*_{\gamma}\bullet \eta_{\gamma})*g(x) =  \Big(\sum_{\gamma\in \Gamma} \xi^*_{\gamma}\bullet \eta_{\gamma}\Big)*g(x) = \inpro{\xi}{\eta}*g(x).
\end{align*}
The second equality in the first line follows form Observation~\ref{obs-corr-1}(2) and second equality in the second line follows because the sum is finite.

\noindent For Equation~\eqref{equ-corr-10},
\begin{align*}
\inpro{\xi}{\eta}^*(x) &= (\inpro{\xi}{\eta}(x^{-1}))^* = \Big(\sum_{\gamma\in \Gamma}\xi^*_{\gamma}\bullet \eta_{\gamma}(x^{-1})\Big)^* \\
&= \Big(\sum_{\gamma\in \Gamma} \int_{G} \widetilde{\xi_{\gamma}}^*(z) \widetilde{\eta_{\gamma}}(z^{-1}x^{-1}) \dd\lambda^{s(x)}(z)\Big)^*\\
&= \sum_{\gamma \in \Gamma} \int_{G} \bigl(\widetilde{\eta_{\gamma}}(z^{-1}x^{-1})\bigr)^* \widetilde{\xi_{\gamma}}(z^{-1}) \dd\lambda^{s(x)}(z)  \\
&=  \sum_{\gamma \in \Gamma} \int_{G} \widetilde{\eta_{\gamma}}^*(xz) \widetilde{\xi_{\gamma}}(z^{-1}) \dd\lambda^{s(x)}(z).
\end{align*}
Using the change of variables \(z\mapsto x^{-1}z\), the last term can be written as 
\[
 \sum_{\gamma \in \Gamma} \int_{G} \widetilde{\eta_{\gamma}}^*(z) \widetilde{\xi_{\gamma}}(z^{-1}x) \dd\lambda^{r(x)}(z) =  \sum_{\gamma \in \Gamma} \eta_{\gamma}^*\bullet \xi_{\gamma}(x) = \inpro{\eta}{\xi}(x).
\]
For Equation~\eqref{equ-corr-11}, we assume \(\supp(\xi)\subseteq c^{-1}(\delta)\) for some \(\delta \in \Gamma\), 
\begin{align*}
\inpro{\xi*_L\eta}{\zeta} (x) &= \sum_{\gamma \in \Gamma} (\xi*_L\eta)_{\gamma}^*\bullet \widetilde{\zeta_{\gamma}}(x) 
= \sum_{\gamma \in \Gamma} \int_{G} \widetilde{(\xi*_L\eta)_{\delta\gamma}}^*(z) \widetilde{\zeta_{\delta\gamma}}(z^{-1}x) \dd \lambda^{r(x)}(z)\\
&= \sum_{\gamma \in \Gamma} \int_{G} \int_{G} \bigl( \widetilde{\xi_{\delta}}(y) \widetilde{\eta_{\gamma}}(y^{-1}z^{-1})\bigr)^* \dd \lambda^{s(z)}(y) \widetilde{\zeta_{\delta\gamma}}(z^{-1}x) \dd \lambda^{r(x)}(z) \\
&= \sum_{\gamma \in \Gamma} \int_{G} \int_{G}  \widetilde{\eta_{\gamma}}^*(zy) \widetilde{ \xi_{\delta}}^*(y^{-1}) \widetilde{\zeta_{\delta\gamma}}(z^{-1}x)  \dd \lambda^{s(z)}(y) \dd \lambda^{r(x)}(z)
\end{align*}
Using the change of variables \(y\mapsto z^{-1}y\), the last term can be written as 
\[
\sum_{\gamma \in \Gamma} \int_{G} \int_{G}  \widetilde{\eta_{\gamma}}^*(y)  \widetilde{\xi_{\delta}}^*(y^{-1}z) \widetilde{\zeta_{\delta\gamma}}(z^{-1}x)  \dd \lambda^{r(z)}(y) \dd \lambda^{r(x)}(z).
\]
Again, using the change of variables \(z\mapsto yz\) and Fubini theorem, the above term becomes,
\begin{align*}
	&\sum_{\gamma \in \Gamma} \int_{G} \int_{G}  \widetilde{\eta_{\gamma}}^*(y)  \widetilde{\xi_\delta}^*(z) \widetilde{\zeta_{\delta\gamma}}(z^{-1}y^{-1}x) \dd \lambda^{s(y)}(z) \dd \lambda^{r(x)}(y) \\
	&= \sum_{\gamma \in \Gamma} \int_{G}  \widetilde{\eta_{\gamma}}^*(y) (\widetilde{\xi^**_L\zeta})_{\gamma}(y^{-1}x)  \dd \lambda^{r(x)}(y) \\
	&= \sum_{\gamma \in \Gamma}  \eta^*_{\gamma}\bullet (\xi^**_L\zeta)_{\gamma} = \inpro{\eta}{\xi^**_L\zeta}.
\end{align*}
If \(\xi\in \Cc(G;\A)\), then we can cover the compact set \(\supp(\xi)\) by the open cover \(\{c^{-1}(\gamma)\}_{\gamma\in \Gamma}\) and hence admits a finite subcover. Thus, using a partition of unity argument, we conclude that Equation~\eqref{equ-corr-11} holds.
\end{proof}

\begin{lemma}\label{lem-con-ind-lim-1}
	\begin{enumerate}
		\item Suppose \((\xi_n)_{n\in \N}\) is a sequence of sections in \(\Cc(G;\A)\) that converges
	to the zero section in the inductive limit topology. Then \(\inpro{\xi_n}{\xi_n}\to 0\) in \(\Cc(G_e; \A|_{G_e})\) in the inductive limit topology, hence in \(\Cst(G_e;\A|_{G_e})\) as well.
	\item Let \(g\in \Cc(G;\A)\). If \(f_n\to f\) in \(\Cc(G;\A)\) in the inductive limit
	topology, then \(f_n*_Lg \to f*_Lg\) in \(\Cc(G;\A)\) in the inductive limit topology.
\end{enumerate}
\end{lemma}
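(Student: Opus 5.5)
The plan is to reduce both parts to the standard fact that convolution is continuous for the inductive limit topology. We also use two structural facts: restriction to a clopen set preserves inductive limit convergence, and the decomposition \(\xi=\sum_\gamma\xi_\gamma\) involves only finitely many \(\gamma\) uniformly along a convergent sequence. Recall that \(\xi_n\to 0\) in the inductive limit topology means there is a compact \(K\subseteq G\) with \(\supp(\xi_n)\subseteq K\) for all \(n\), and \(\sup_{x}\norm{\xi_n(x)}\to 0\).

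\emph{Part (1).} Cover \(K\) by the clopen sets \(\{c^{-1}(\gamma)\}_{\gamma\in\Gamma}\) and extract a finite subcover indexed by \(F\subseteq\Gamma\). Then \((\xi_n)_\gamma=0\) for all \(n\) and all \(\gamma\notin F\), so
\[
\inpro{\xi_n}{\xi_n}=\sum_{\gamma\in F}(\xi_n)_\gamma^*\bullet(\xi_n)_\gamma
\]
is a sum with a fixed, finite number of terms. For each \(\gamma\in F\), the extension \(\widetilde{(\xi_n)_\gamma}\) is supported in the compact set \(K\cap c^{-1}(\gamma)\), and \(\norm{\widetilde{(\xi_n)_\gamma}}_\infty\le\norm{\xi_n}_\infty\to0\). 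The term \((\xi_n)_\gamma^*\bullet(\xi_n)_\gamma\) is the restriction to \(G_e\) of the convolution \(\widetilde{(\xi_n)_\gamma}^{\,*}*\widetilde{(\xi_n)_\gamma}\) in \(\Cc(G;\A)\). That convolution is supported in \(K^{-1}K\), which is compact, and at each \(x\) the integrand has norm at most \(\norm{\xi_n}_\infty^2\). Hence
\[
\norm{\widetilde{(\xi_n)_\gamma}^{\,*}*\widetilde{(\xi_n)_\gamma}(x)}\le \norm{\xi_n}_\infty^2\,\sup_{u\in\base}\lambda^u(K^{-1}),
\]
and the supremum is finite by continuity of the Haar system and compactness of \(K^{-1}\). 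Restricting to \(G_e\) keeps the support inside the compact set \(K^{-1}K\cap G_e\), because \(G_e\) is closed. So each term, and therefore the finite sum, tends to \(0\) in the inductive limit topology of \(\Cc(G_e;\A|_{G_e})\).

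Convergence in \(\Cst(G_e;\A|_{G_e})\) then follows because the universal norm is dominated by the \(I\)-norm. With supports in a fixed compact set \(L\), the \(I\)-norm is in turn dominated by \(\norm{\cdot}_\infty\,\sup_u\max\{\lambda^u(L),\lambda^u(L^{-1})\}\), which tends to \(0\).

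\emph{Part (2).} By linearity, set \(h_n=f_n-f\), so that \(h_n\to0\) with supports in a fixed compact set \(K\). Then \(h_n*_Lg=h_n*g\) is supported in \(K\cdot\supp(g)\), which is compact as the image of a compact subset of \(G^{(2)}\). The same estimate as above gives
\[
\norm{h_n*g(y)}\le\norm{h_n}_\infty\norm{g}_\infty\sup_u\lambda^u(K)\to0 .
\]

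The only point needing care is continuity of the sections involved: that \(\widetilde{\xi_\gamma}\) is a continuous section and that convolution lands in \(\Cc\). The first holds because \(c^{-1}(\gamma)\) is clopen, and the second is standard for Fell bundles \cite{Muhly-Williams2008Equivalence-and-disinte-thm-Fell-bundle}. Beyond that, the uniform finiteness of \(F\) is the key observation, and the rest is routine.
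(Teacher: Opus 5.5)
Your proof is correct and follows essentially the same route as the paper. For (1), the paper also argues that only finitely many $\gamma$ contribute, that each term $(\xi_n)^*_\gamma\bullet(\xi_n)_\gamma$ tends to $0$ uniformly, and that all supports lie in the fixed compact set $\bigcup_{\gamma\in F}(K\cap G_\gamma)^{-1}(K\cap G_\gamma)$. You additionally supply details the paper leaves implicit: the uniform bound via $\sup_u\lambda^u(K^{-1})$, and the passage to the $\mathrm{C}^*$-norm through the $I$-norm. For (2), you write out the standard convolution estimate, whereas the paper simply cites Lemma 3.19(2) of Holkar--Hossain.
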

\begin{proof}
\noindent (1). Since \(\xi_n \to 0\) in the inductive limit topology, there is a fixed compact set~\(K\) such that \(\supp(\xi_n) \subseteq K\) for all \(n\). We write \(\xi_n =\sum_{\gamma \in \Gamma} (\xi_n)_\gamma\), with \(\supp((\xi_n)_\gamma) \subseteq G_{\gamma}\).
Therefore, 
\[
\inpro{\xi_n}{\xi_n} = \sum_{\gamma \in \Gamma} (\xi_n)^*_\gamma\bullet (\xi_n)_\gamma \to 0 
\]
uniformly as \(n\to \infty\) because each \((\xi_n)^*_\gamma\bullet (\xi_n)_\gamma \to 0\) uniformly as \(n\to \infty\) and supported in the compact set \((K\cap G_\gamma)^{-1}(K\cap G_\gamma)\). Since \(\supp(\inpro{\xi_n}{\xi_n})\) is in the compact set \(\bigcup_{\textup{finite}}(K\cap G_\gamma)^{-1}(K\cap G_\gamma) \), \(\inpro{\xi_n}{\xi_n} \to 0\) in the inductive limit topology.

\noindent (2). This part follows from a similar argument as Lemma~3.19(2) of~\cite{Holkar-Hossain-2024-KMS-states}.
 \end{proof}

\begin{lemma}\label{lem-con-ind-lim-2}
	Let \(g, h\in \Cc(G;\A)\). Then the map \(f\mapsto \inpro{g}{f*_Lh}\)
	from \(\Cc(G;\A)\) to \(\Cc(G_e;\A|_{G_e})\) is continuous in the inductive limit topology.
\end{lemma}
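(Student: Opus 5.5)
The plan is to reduce the statement to Lemma~\ref{lem-con-ind-lim-1}(2) plus a continuity estimate for the inner product in one variable. Suppose \(f_n \to f\) in \(\Cc(G;\A)\) in the inductive limit topology. Then there is a compact set \(K\subseteq G\) with \(\supp(f_n),\supp(f)\subseteq K\), and \(f_n\to f\) uniformly. By Lemma~\ref{lem-con-ind-lim-1}(2), \(f_n*_Lh \to f*_Lh\) in the inductive limit topology. Set \(\zeta_n \defeq f_n*_Lh - f*_Lh\). By Equation~\eqref{equ-corr-8}, it suffices to show that \(\inpro{g}{\zeta_n}\to 0\) in the inductive limit topology of \(\Cc(G_e;\A|_{G_e})\) whenever \(\zeta_n\to 0\) in the inductive limit topology of \(\Cc(G;\A)\).

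For this, fix a compact set \(L\) containing \(\supp(\zeta_n)\) for all \(n\) and \(\supp(g)\). Since \(L\) is compact and the sets \(G_\gamma\) are open, only finitely many \(\gamma\in F\subseteq\Gamma\) have \(L\cap G_\gamma\neq\emptyset\). Hence
\[
\inpro{g}{\zeta_n}=\sum_{\gamma\in F} g_\gamma^*\bullet (\zeta_n)_\gamma ,
\]
a finite sum. Each term is supported in the compact set \((L\cap G_\gamma)^{-1}(L\cap G_\gamma)\cap G_e\), so the supports of \(\inpro{g}{\zeta_n}\) lie in a fixed compact subset of \(G_e\).

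For the uniform estimate, take \(x\in G_e\). Then
\[
\|g_\gamma^*\bullet(\zeta_n)_\gamma(x)\| \le \int_G \|\widetilde{g_\gamma}(z^{-1})\|\,\|\widetilde{(\zeta_n)_\gamma}(z^{-1}x)\|\,\dd\lambda^{r(x)}(z) \le \|\zeta_n\|_\infty \sup_{u\in\base}\lambda^u\bigl((L\cap G_\gamma)^{-1}\bigr)\,\|g\|_\infty .
\]
The supremum is finite because continuity of the Haar system gives \(\sup_u \lambda^u(C)<\infty\) for compact \(C\). Since \(\|\zeta_n\|_\infty\to 0\), each term tends to \(0\) uniformly, and so does the finite sum. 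Together with the fixed compact support, this gives convergence to \(0\) in the inductive limit topology.

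The one step needing care is the input from Lemma~\ref{lem-con-ind-lim-1}(2), namely that \(f_n*_Lh\to f*_Lh\) with supports in the fixed compact set \(KH\cdot\supp(h)\). Granting that lemma, as we may, the rest is the elementary estimate above. An alternative route is to combine Equation~\eqref{equ-corr-11}, which gives \(\inpro{g}{f*_Lh}=\inpro{f^**_Lg}{h}\), with continuity of \(f\mapsto f^*\) and the same estimate in the first variable. I would use the direct argument.
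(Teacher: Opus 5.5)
Your argument is correct, but it takes a different route from the paper's.

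\textbf{The paper's route.} After fixing a compact set containing all supports, the paper writes \(\inpro{g}{f_n*_Lh}-\inpro{g}{f*_Lh}=\inpro{g}{(f_n-f)*_Lh}\). It then applies the \(\Cst\)-identity and the Cauchy--Schwarz inequality for the \(\Cst(G_e;\A|_{G_e})\)-valued inner product, bounding the square of the norm by \(\|\inpro{g}{g}\|\,\|\inpro{(f_n-f)*_Lh}{(f_n-f)*_Lh}\|\). Finally it invokes both parts of Lemma~\ref{lem-con-ind-lim-1}.

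\textbf{Your route.} You bypass the Hilbert-module structure entirely. After reducing via Lemma~\ref{lem-con-ind-lim-1}(2) to the case \(\zeta_n\to 0\), you split \(\inpro{g}{\zeta_n}\) into finitely many pieces over \(\Gamma\). You then estimate each piece pointwise, using \(\|a^*b\|\le\|a\|\|b\|\) and the bound \(\sup_u\lambda^u(C)<\infty\) for compact \(C\).

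\textbf{What each buys.} The paper's argument is shorter once the module formalism is available, but it has two limitations.
\begin{enumerate}
\item It needs positivity of the inner product to apply Cauchy--Schwarz, and that positivity is only established afterwards, in Lemma~\ref{lem-inner-prod-positive}(1).
\item Because it goes through the \(\Cst\)-identity, it controls the \(\Cst\)-norm of \(\Cst(G_e;\A|_{G_e})\), not the supremum norm. That is enough for its later use in the pre-representation axioms, since states are norm-continuous. For non-\'etale \(G\), however, \(\Cst\)-norm convergence with supports in a fixed compact set does not imply uniform convergence (think of \(\Cst(\mathbb{R})\)). So that argument is weaker than the inductive-limit convergence the lemma asserts.
\end{enumerate}
Your estimate gives exactly what is claimed: uniform convergence with supports in a fixed compact subset of \(G_e\). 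It uses only the Fell bundle norm inequality and continuity of the Haar system, with no positivity required.

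One small correction: the fixed compact set in your last paragraph should read \(K\cdot\supp(h)\).
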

\begin{proof} 
Let \((f_n)_{n\in \N}\) be a sequence in \(\Cc(G;\A)\) converges to \(f\) in the inductive limit topology. Let \(K\) be a fixed compact set with \(\supp(f_n)\) and \(\supp(f)\) contained in~\(K\). Then \(\supp(\inpro{g}{f_n*_Lh})\) and \(\supp(\inpro{g}{f*_Lh})\) are also contained in some fixed compact set. Using Lemma~\ref{lem-bi-module} and Cauchy--Schwarz inequality, we have 
\begin{align*}
\norm{\inpro{g}{f_n*_Lh}-\inpro{g}{f*_Lh}}^2 &= \norm{\inpro{g}{(f_n-f)*_Lh}}^2\\ 
&= \norm{\inpro{g}{(f_n-f)*_Lh}^*\inpro{g}{(f_n-f)*_Lh}} \\
&\leq \norm{\inpro{g}{g}} \norm{\inpro{(f_n-f)*_Lh}{(f_n-f)*_Lh}}.
\end{align*}
Since \(f_n\to f\) in the inductive limit topology, Lemma~\ref{lem-con-ind-lim-1}(2) ensures that \(f_n*_Lh \to f*_Lh\) in the inductive limit topology and Lemma~\ref{lem-con-ind-lim-1}(1), gives us
 \[
 \inpro{(f_n-f)*_Lh}{(f_n-f)*_Lh} \to 0
 \] 
 in the inductive limit topology. This completes the proof.
\end{proof}

\begin{lemma}\label{lem-inner-prod-positive}
	\begin{enumerate}
		\item For \(\xi\in \Cc(G;\A)\), \(\inpro{\xi}{\xi}\geq 0\) in \(\Cc(G_{e}; \A|_{G_e})\).
		\item  \(\textup{span}\{\inpro{\xi}{\eta} : \xi, \eta \in \Cc(G;\A)\}\) is dense in \(\Cst(G_{e}; \A|_{G_e})\).
		
		\item Let \(\eta, \zeta \in \Cc(G;\A)\). If \(\xi_n\to \xi\) in \(\Cc(G;\A)\) in the inductive limit topology, then \(\inpro{\xi_n*_L\eta}{\zeta} \to \inpro{\xi*_L\eta}{\zeta}\) in \(\Cc(G_{e};\A|_{G_e})\) in the inductive limit topology. 
	\end{enumerate}
\end{lemma}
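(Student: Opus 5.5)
Part (1) will be reduced to a statement about each graded piece. By definition, \(\inpro{\xi}{\xi}=\sum_{\gamma}\xi_\gamma^*\bullet\xi_\gamma\), and this is a finite sum. So it suffices to show that each \(\xi_\gamma^*\bullet\xi_\gamma\) is positive in \(\Cst(G_e;\A|_{G_e})\). The plan is to realise these terms inside a linking-type structure. Fix \(\gamma\) and regard \(X_\gamma\defeq\Cc(G_\gamma;\A|_{G_\gamma})\), with the right action \(*_R\) and the inner product \(\xi_\gamma^*\bullet\eta_\gamma\), as a pre-Hilbert module over \(\Cc(G_e;\A|_{G_e})\).

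For positivity I would use the standard criterion of Renault and Muhly--Williams: an element \(f\) of the pre-\(\Cst\)-algebra is positive as soon as \(\omega(f)\geq 0\) for every state \(\omega\) coming from a pre-representation \(L\) of \(\Cc(G_e;\A|_{G_e})\) on a pre-Hilbert space \(\mathcal V_0\). That is, it suffices to show \(\inpro{L(\xi_\gamma^*\bullet\xi_\gamma)v}{v}\geq 0\) for \(v\in\mathcal V_0\).

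The first approach I would try is the following. The groupoid \(G_\gamma\) is a \(G_{e}\)-\(G_{e}\) space (via \(G_e\cdot G_\gamma\cdot G_e\) is not closed in general; rather I would use the right \(G_e\)-action on \(G_\gamma\) and the left action of the groupoid \(G_{\gamma}G_{\gamma}^{-1}\subseteq G_{e}\)). This should make \(X_\gamma\) an equivalence-type bimodule in the sense of the Muhly--Williams Fell bundle equivalence theorem, between a suitable Fell bundle over \(G_\gamma G_\gamma^{-1}\) and \(\A|_{G_e}\). Positivity of \(\xi_\gamma^*\bullet\xi_\gamma\) is then exactly the positivity of the right inner product established in that theorem, via the disintegration theorem.

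As a fallback I would use an approximation argument. Choose a local approximate identity built from sections supported on small bisection-like neighbourhoods, and write \(\xi_\gamma^*\bullet\xi_\gamma\) as an inductive-limit limit of sums of the form \(\sum_i h_i^*\ast h_i\) with \(h_i\in\Cc(G_e;\A|_{G_e})\). One way to produce these is to use the fibrewise factorisation \(a^*a\) in \(\A_{s}\), together with saturation. Since positive elements form a closed cone, and inductive-limit convergence implies \(\Cst\)-norm convergence, positivity would then follow.

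For (2), I would first show that the products \(\Cc(G_e)*\Cc(G_e)\) are dense in \(\Cc(G_e;\A|_{G_e})\) in the inductive limit topology, using an approximate identity for \(\Cc(G_e;\A|_{G_e})\). Then, for \(\xi,\eta\) supported in \(G_e\) and extended by zero, the formula gives \(\inpro{\tilde\xi}{\tilde\eta}=\xi^**\eta\), since only the \(\gamma=e\) term survives. Hence the span contains \(\Cc(G_e)^**\Cc(G_e)\), which is dense in \(\Cst(G_e;\A|_{G_e})\).

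For (3), Equation~\eqref{equ-corr-11} rewrites \(\inpro{\xi_n*_L\eta}{\zeta}\) as \(\inpro{\eta}{\xi_n^**_L\zeta}\). Since involution is continuous in the inductive limit topology, \(\xi_n^*\to\xi^*\). Lemma~\ref{lem-con-ind-lim-2}, applied with \(g=\eta\) and \(h=\zeta\), then gives the convergence. The only point to check is that the inductive-limit convergence there is in the sense claimed: supports stay in a fixed compact set, and \(\inpro{\eta}{\cdot}\) is given by a uniformly continuous integral formula, so uniform convergence follows directly.

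I expect the main obstacle to be part (1). In the group-valued line-bundle case this is handled by writing the inner product as a restriction of \(\xi^**\xi\) to \(G_e\) (the conditional expectation). For Fell bundles, positivity of that restriction is itself what is being proved, so the equivalence-theorem route seems necessary.
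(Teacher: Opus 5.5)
Your proposal is sound with one correction in part (1), noted below. Parts (2) and (3) are essentially the paper's argument. For (2) you use an approximate identity \((u_\alpha)\) in \(\Cc(G_e;\A|_{G_e})\) together with \(\inpro{u_\alpha}{f}=u_\alpha^* * f\) for \(f\) supported in \(G_e\). For (3) you rewrite via \eqref{equ-corr-11} and then apply Lemma~\ref{lem-con-ind-lim-1}(2) and Lemma~\ref{lem-con-ind-lim-2}. Keep your direct bound \(\sup_{x}\norm{\inpro{\eta}{\psi}(x)}\le \norm{\eta}_I\,\sup_g\norm{\psi(g)}\), with supports in \(\supp(\eta)^{-1}K\) whenever \(\supp(\psi)\subseteq K\). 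That bound is what actually gives convergence in the inductive limit topology. The Cauchy--Schwarz estimate in the proof of Lemma~\ref{lem-con-ind-lim-2} only controls the \(\Cst\)-norm.

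Part (1) is where you genuinely differ from the paper. The paper notes that \(\xi_\gamma^*\bullet\xi_\gamma=\widetilde{\xi_\gamma}^* * \widetilde{\xi_\gamma}\) is positive in \(\Cst(G;\A)\). It then transfers this to \(\Cst(G_e;\A|_{G_e})\) by spectral permanence through the \(I\)-norm completions. That is short, but the transfer step is delicate. Permanence needs \(\Cst(G_e;\A|_{G_e})\) to sit inside \(\Cst(G;\A)\) as a \(\Cst\)-subalgebra, which is exactly what Theorem~\ref{thm-isometric-isomorphism} later deduces from this lemma. Moreover, in the Banach \(^*\)-algebra \(\overline{\Cc(G;\A)}^{I}\) an element \(a^*a\) need not have spectrum in \([0,\infty)\), so the detour through \(I\)-norm closures does not supply the missing input. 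Your equivalence-theorem route is heavier, since it invokes the Muhly--Williams equivalence theorem, but it is not circular.

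The correction: take the right-hand groupoid to be \(G_e|_{s(G_\gamma)}\), not \(G_e\). When \(s(G_\gamma)\neq\base\), the space \(G_\gamma\) is not an equivalence onto all of \(G_e\). With this change:
\begin{enumerate}
\item \(G_\gamma\) is a \((G_e|_{r(G_\gamma)},G_e|_{s(G_\gamma)})\)-equivalence under multiplication. Freeness and properness are inherited from \(G\), and \(r\) induces \(G_\gamma/G_e\cong r(G_\gamma)\), because \(y,y'\in G_\gamma\) with \(r(y)=r(y')\) forces \(y^{-1}y'\in G_e\).
\item \(\A|_{G_\gamma}\) is a Fell-bundle equivalence, with inner products \(ab^*\) and \(a^*b\), by the imprimitivity-bimodule axiom of Definition~\ref{def:fell-bundle}.
\item A change of variables using left invariance identifies the Muhly--Williams right inner product with \(\xi_\gamma^*\bullet\xi_\gamma\).
\end{enumerate}
This gives positivity in \(\Cst(G_e|_{s(G_\gamma)};\A)\). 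To push it into \(\Cst(G_e;\A|_{G_e})\), note that \(s(G_\gamma)\) is open and \(G_e\)-invariant. So the inclusion \(\Cc(G_e|_{s(G_\gamma)};\A)\to\Cc(G_e;\A|_{G_e})\) is an \(I\)-norm preserving \(^*\)-homomorphism and extends to a \(^*\)-homomorphism of the universal completions. That homomorphism carries positive elements to positive elements, and no injectivity is needed for this. Your fallback via bisection-like neighbourhoods is unnecessary, and would not apply to non-\'etale \(G\) in any case.
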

 \begin{proof}
\noindent (1).  
We have \(\inpro{\xi}{\xi} = \sum_{\gamma \in F} \xi^*_{\gamma}\bullet\xi_{\gamma}\) for a finite subset \(F\subseteq \Gamma\). As sum of positive elements is positive, \(\inpro{\xi}{\xi}\) is positive in \(\Cc(G;\A)\). Since \(\inpro{\xi}{\xi} \in \Cc(G_e; \A|_{G_e})\) and \(\overline{\Cc(G_e;\A|_{G_e})}^{I} \subseteq \overline{\Cc(G;\A)}^{I}\), spectral permanence allows us to conclude that \(\inpro{\xi}{\xi}\) is positive in \(\Cc(G_e;\A|_{G_e})\).

\noindent (2).
Let \(f\in \Cc(G_e; \A|_{G_e})\). Choose a self-adjoint approximate identity of \((u_\alpha)_{\alpha}\) in \(\Cst(G_e; \A|_{G_e})\) such that \(u_\alpha \in \Cc(G_e; \A|_{G_e})\) with each \(u_\alpha\) has norm al most \(1\) (since the groupoid \(G_e\) is second countable the Fell bundle is saturated such approximate identity always exists see~\cite[Proposition 6.10]{Muhly-Williams2008Equivalence-and-disinte-thm-Fell-bundle}).
Let \(f\in \Cc(G_e; \A|_{G_e})\). Then 
\[
\inpro{u_\alpha}{f} = (u_\alpha)^*_e\bullet (f)_e= u_\alpha\bullet f = u_\alpha * f \to f. 
\]

\noindent (3). The proof follows from a similar argument as Lemma~\ref{lem-con-ind-lim-2}.
 \end{proof}
\medskip
Lemma~\ref{lem-bi-module} shows that \(\Cc(G;\A)\) is a right module over \(\Cc(G_{e}; \A|_{G_e})\), and Lemma~\ref{lem-inner-prod-positive}(2) shows that the \(\Cc(G_e;\A|_{G_e})\)-valued inner product \(\inpro{\cdot}{
\cdot}\) defined by Equation~\eqref{eq:inner-prod} is positive definite. Let \(Y\) be the Hilbert \(\Cst(G_e; \A|_{G_e})\)-module obtained by completing the pre-Hilbert \(\Cst(G_e;\A|_{G_e})\)\nb-module \(\Cc(G;\A)\). Lemma~\ref{lem-inner-prod-positive}(2) implies that \(Y\) is a full Hilbert
module.
\begin{lemma}\label{lem-left-action-non-deg}
The \(\textup{span}\{\xi*_L\eta: \xi, \eta\in \Cc(G,\A)\}\) is dense in the right Hilbert-\(\Cst(G_e; \A|_{G_e})\) module \(Y\).
\end{lemma}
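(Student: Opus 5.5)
It suffices to show that every \(\xi\in\Cc(G;\A)\) lies in the closure, with respect to the Hilbert module norm \(\norm{\zeta}_Y=\norm{\inpro{\zeta}{\zeta}}^{1/2}\), of \(\textup{span}\{\xi*_L\eta\}\). Indeed, \(\Cc(G;\A)\) is dense in \(Y\) by construction. The plan is to use an approximate identity for \(\Cc(G;\A)\) in the inductive limit topology and transfer convergence to the \(Y\)-norm using Lemma~\ref{lem-con-ind-lim-1}(1).

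First I would invoke the existence of an approximate identity \((e_n)_{n\in\N}\subseteq \Cc(G;\A)\) for the convolution algebra in the inductive limit topology. This uses the standing assumptions that \(\A\) is saturated and \(G\) is second countable, as in~\cite[Proposition 6.10]{Muhly-Williams2008Equivalence-and-disinte-thm-Fell-bundle}, which was already used in the proof of Lemma~\ref{lem-inner-prod-positive}(2) for \(G_e\). The property I need is that for each \(\xi\in\Cc(G;\A)\) we have \(e_n*\xi\to\xi\) in the inductive limit topology. Since \(G\) is second countable, a sequence can be taken. If only a net is available, the argument below is unchanged.

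Given this, set \(\zeta_n\defeq e_n*_L\xi-\xi\). Then \(\zeta_n\to0\) in the inductive limit topology. By Lemma~\ref{lem-con-ind-lim-1}(1), \(\inpro{\zeta_n}{\zeta_n}\to0\) in the inductive limit topology of \(\Cc(G_e;\A|_{G_e})\), hence in \(\Cst(G_e;\A|_{G_e})\). The last step uses that the universal norm is dominated by the \(I\)-norm, which is controlled by the supremum norm times a constant on a fixed compact set. Therefore \(\norm{e_n*_L\xi-\xi}_Y\to0\). So \(\xi\) is in the closed span of the elements \(\xi'*_L\eta\), and the density of \(\Cc(G;\A)\) in \(Y\) finishes the proof.

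The main obstacle is justifying the inductive-limit approximate identity for the whole Fell bundle \(\A\) over \(G\), rather than only for \(G_e\). If the cited proposition gives only \(\Cst\)-norm convergence, I would instead follow its construction. That construction builds \(e_n\) from sums \(\sum_i f_i^* * f_i\), with \(f_i\) supported in small neighbourhoods of \(\base\) and compact supports inside a fixed neighbourhood. One then checks directly that \(e_n*\xi\to\xi\) uniformly, with supports in a fixed compact set, using upper semicontinuity of the norm and saturation of \(\A\).
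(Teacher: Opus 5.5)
Your proof is correct, but it approximates from the opposite side of the module from the paper's proof.

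\textbf{Your route.} You use a \emph{left} approximate identity \((e_n)\) of the acting algebra \(\Cc(G;\A)\) in the inductive limit topology. You then transfer \(e_n*\xi\to\xi\) to the \(Y\)-norm via Lemma~\ref{lem-con-ind-lim-1}(1) and the domination of the universal norm by the \(I\)-norm.

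\textbf{The paper's route.} It takes an approximate unit \((u_\alpha)\subseteq\Cc(G_e;\A|_{G_e})\) of the \emph{coefficient} algebra \(\Cst(G_e;\A|_{G_e})\). It uses, implicitly, that \(f*_L\iota(u_\alpha)=f*_Ru_\alpha\), which follows from left invariance of the Haar system. It then expands
\[
\norm{f*_Ru_\alpha-f}_Y^2=\norm{u_\alpha^*\inpro{f}{f}u_\alpha-u_\alpha^*\inpro{f}{f}-\inpro{f}{f}u_\alpha+\inpro{f}{f}},
\]
which tends to \(0\) by the standard Hilbert-module fact \(\xi\cdot u_\alpha\to\xi\).

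\textbf{Comparison.} The paper's argument needs only a \(\Cst\)-norm approximate unit for the subgroupoid \(G_e\) and a short algebraic computation. It exploits the fact that right multiplication by elements of \(\Cc(G_e;\A|_{G_e})\) is itself a left convolution. Your argument needs a stronger input: an inductive-limit approximate identity for all of \(\Cc(G;\A)\). This is supplied by \cite[Proposition 6.10]{Muhly-Williams2008Equivalence-and-disinte-thm-Fell-bundle} under the standing saturated, separable and second-countable hypotheses, so the ``main obstacle'' you flag is not a real gap.

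Your argument also relies on Lemma~\ref{lem-con-ind-lim-1}(1). Its proof works verbatim for nets that are eventually supported in a fixed compact set, as you anticipated. In return, your argument avoids the inner-product expansion entirely. It also exhibits nondegeneracy directly through the acting algebra \(\Cc(G;\A)\), which is the form used in the pre-representation argument that follows the lemma.
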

\begin{proof}
Let \((u_\alpha)_{\alpha}\) be a self-adjoint approximate identity of \(\Cst(G_e;\A|_{G_e})\) such that \(u_\alpha \in \Cc(G_e;\A|_{G_e})\) and \(\norm{u_\alpha} \leq 1\). Let \(f\in \Cc(G;\A)\) with \(f =\sum_{\gamma \in F} f_\gamma\), where \(F\) is a finite subset of \(\Gamma\).
Now, we have 
\begin{align*}
\norm{f*_L\widetilde{u_\alpha} - f}^2_{\Cst(G_e; \A|_{G_e})}  &= \norm{(f*_L\widetilde{u_\alpha} - f)^*(f*_L\widetilde{u_\alpha} - f)} \\
&= \bigr\|\sum_{\gamma \in F}(f_\gamma *u_\alpha - f_\gamma)^*(f_\gamma*u_\alpha - f_{\gamma}) \bigr \|\\
&= \bigr\| \sum_{\gamma \in F} u^*_\alpha f^*_{\gamma} f_{\gamma} u_{\alpha} - u^*_{\alpha} f^*_{\gamma} f_{\gamma} - f^*_{\gamma} f_{\gamma} u_{\alpha} -f^*_{\gamma} f_{\gamma} \bigr\| \\
&\leq \sum_{\gamma \in F} \big(\norm{u_\alpha} \norm{f^*_{\gamma} f_{\gamma} u_{\alpha}- f^*_{\gamma} f_{\gamma}} + \norm{ f^*_{\gamma} f_{\gamma} - f^*_{\gamma} f_{\gamma} u_{\alpha}} \big)\\
&\leq 2\sum_{\gamma \in F} \norm{f^*_{\gamma} f_{\gamma} u_{\alpha}- f^*_{\gamma} f_{\gamma}}.
\end{align*}
The last term goes to zero as \(\alpha \to \infty\).
\end{proof}

We now show that the left action \(*_L\) defined by Equation~\eqref{eq:left-action}
extends to a nondegenerate
representation of \(\Cst(G;\A)\) on \(Y\) by adjointable operators. Equation~\eqref{equ-corr-11} of Lemma~\ref{lem-bi-module} ensures that \(\Cc(G;\A)\) acts on itself by adjointable operators.
To show that the left action is bounded, let \(\phi\) be a state on \(\Cst(G_e; \A|_{G_e})\). Consider the Hilbert space completion of the inner product space \((Y, \inpro{\cdot}{\cdot}_{\phi})\), where \(\inpro{f}{g}_{\phi} \defeq \phi(\inpro{f}{g}) \) for \(f,g\in Y\). We denote this Hilbert space completion by~\(Y_{\phi}\).
Define the vector subspace \(W\subseteq Y_{\phi}\) generated by \(\{\xi*_L\eta : \xi, \eta \in \Cc(G;\A)\}\). Lemma~\ref{lem-left-action-non-deg} shows that \(W\) is dense subspace of \(Y\). Define a representation \(\pi \) of \(\Cc(G;\A)\) on \(W\) by 
\[
  \pi(f)(g) = f*_Lg
\]
for \(f,g \in \Cc(G;\A)\). We now show that \(\pi\) satisfies the three conditions required to be a pre-representation (see Definition~4.1 of~\cite{Muhly-Williams2008Equivalence-and-disinte-thm-Fell-bundle}) of \(\Cc(G;\A)\) on \(W\).
\begin{enumerate}
	\item \(\pi\) is nondegenerate follows from Lemma~\ref{lem-left-action-non-deg}. 
	\item Lemma~\ref{lem-con-ind-lim-2} ensures that, the map \(f \mapsto \inpro{\xi}{\pi(f)\eta}_{\phi}\) is continuous in the inductive limit topology for \(\xi, \eta\) and \(f \in\Cc(G;\A)\).
	\item By Lemma~\ref{lem-bi-module}, we have \(\inpro{f*_L\xi}{\eta}_{\phi} = \inpro{\xi}{f^**_L\eta}_{\phi}\) for \(\xi, \eta, f\in \Cc(G;\A)\).
\end{enumerate}
Therefore, using the disintegration theorem for Fell bundles~\cite[Theorem 4.13]{Muhly-Williams2008Equivalence-and-disinte-thm-Fell-bundle} we extends \(\pi\) to be a
nondegenerate (bounded) representation of \(\Cst(G;\A)\) on \(Y_{\phi}\). Thus,
\[
\phi(\inpro{f*_L\xi}{f*_L\xi}) \leq \norm{f}^2_{\Cst(G;\A)} \phi(\inpro{\xi}{\xi})
\]
for all \(\xi, f\in \Cc(G;\A)\). Since \(\phi\) was an
 arbitrary state on \(\Cst(G_e; \A|_{G_e})\), we have 
 \[
 \inpro{f*_L\xi}{f*_L\xi} \leq \norm{f}^2_{\Cst(G;\A)} \inpro{\xi}{\xi}.
 \]
 Hence, the left action \(*_L\) of \(\Cc(G; \A)\) on itself
 extends to a
nondegenerate representation of \(\Cst(G;\A)\)
 on \(Y\).
 
 Lemmas~\ref{lem-bi-module},~\ref{lem-inner-prod-positive} and the above discussion gives us the following result.
 \begin{theorem}\label{thm-C-st-corr}
 	Let \(p\colon \A\to G\) be a Fell bundle over a locally compact Hausdorff second countable groupoid equipped with a Haar system. Let \(\Gamma\) be a discrete group and \(c\colon G\to \Gamma\) a continuous \(1\)-cocycle. Then the \(\Cc(G;\A)\)-\(\Cc(G_e;\A|_{G_e})\)-bimodule \(\Cc(G;\A)\) completes to a \(\Cst\)-correspondence 
 	\[
 	Y\colon \Cst(G;\A) \to \Cst(G_e;\A|_{G_e}).
 	\] 
 \end{theorem}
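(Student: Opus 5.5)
The plan is to assemble the three ingredients of a \(\Cst\)-correspondence from the lemmas already proved: a right pre-Hilbert module structure over \(\Cc(G_e;\A|_{G_e})\), its completion \(Y\), and a bounded nondegenerate left action of \(\Cst(G;\A)\) by adjointable operators.

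\textbf{Right module and completion.} First we record that \(\Cc(G;\A)\) is a right \(\Cc(G_e;\A|_{G_e})\)-module with a \(\Cc(G_e;\A|_{G_e})\)-valued inner product. Equations~\eqref{equ-corr-1}--\eqref{equ-corr-3} give the module structure. Equations~\eqref{equ-corr-8}--\eqref{equ-corr-10} give sesquilinearity, compatibility with the right action, and hermitian symmetry. Lemma~\ref{lem-inner-prod-positive}(1) gives positivity of \(\inpro{\xi}{\xi}\) in \(\Cst(G_e;\A|_{G_e})\).

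By the standard completion procedure for semi-inner-product modules (e.g.\ Lance, or Raeburn--Williams), we then do three things:
\begin{enumerate}
\item extend the right action to \(\Cst(G_e;\A|_{G_e})\) using \(\norm{\xi*_Rg}^2=\norm{g^*\inpro{\xi}{\xi}g}\le\norm{\xi}^2\norm{g}^2\);
\item quotient by the null vectors;
\item complete.
\end{enumerate}
This yields the Hilbert module \(Y\), which is full by Lemma~\ref{lem-inner-prod-positive}(2).

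\textbf{Left action.} Each \(f\in\Cc(G;\A)\) acts by \(\xi\mapsto f*_L\xi\). By Equation~\eqref{equ-corr-11} this has formal adjoint \(\xi\mapsto f^**_L\xi\), and by~\eqref{equ-corr-6},~\eqref{equ-corr-7} it is a right-module map compatible with the product. The main step is boundedness, \(\inpro{f*_L\xi}{f*_L\xi}\le\norm{f}^2_{\Cst(G;\A)}\inpro{\xi}{\xi}\). We carry it out exactly as in the discussion preceding the theorem:
\begin{enumerate}
\item For a state \(\phi\) of \(\Cst(G_e;\A|_{G_e})\), form the Hilbert space \(Y_\phi\).
\item Verify, via Lemmas~\ref{lem-left-action-non-deg},~\ref{lem-con-ind-lim-2} and~\eqref{equ-corr-11}, that \(f\mapsto(\xi\mapsto f*_L\xi)\) on the dense subspace \(W\) is a pre-representation.
\item Invoke the disintegration theorem~\cite[Theorem 4.13]{Muhly-Williams2008Equivalence-and-disinte-thm-Fell-bundle} to conclude it is bounded by the universal norm.
\item Take the supremum over states to get the operator inequality in \(\Cst(G_e;\A|_{G_e})\).
\end{enumerate}

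\textbf{Extension and conclusion.} The inequality shows that \(\xi\mapsto f*_L\xi\) descends to the quotient and extends to a bounded operator \(\pi(f)\) on \(Y\), adjointable with adjoint \(\pi(f^*)\). Moreover \(\norm{\pi(f)}\le\norm{f}_{\Cst(G;\A)}\), so \(\pi\) extends to a \(^*\)-homomorphism \(\Cst(G;\A)\to\mathcal{L}(Y)\). Nondegeneracy follows from Lemma~\ref{lem-left-action-non-deg}. Together these give the correspondence \(Y\colon\Cst(G;\A)\to\Cst(G_e;\A|_{G_e})\).

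\textbf{Expected obstacle.} The delicate point is the disintegration step. The pre-representation axioms must be checked on \(W\) rather than on all of \(Y_\phi\), and the continuity in the inductive limit topology from Lemma~\ref{lem-con-ind-lim-2} must be passed through \(\phi\). If that is problematic, the fallback is to bound \(\pi(f)\) directly by the \(I\)-norm using Observation~\ref{obs-I-norm}. We would then note that the resulting \(I\)-norm bounded representation on \(Y_\phi\) is dominated by the universal norm by definition.
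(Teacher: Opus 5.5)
Your proposal follows the paper's proof essentially verbatim. The module and inner-product axioms come from Lemma~\ref{lem-bi-module}, positivity and fullness from Lemma~\ref{lem-inner-prod-positive}, and boundedness of the left action from the state-by-state pre-representation on \(W\subseteq Y_\phi\) plus the Muhly--Williams disintegration theorem, exactly as in the discussion preceding Theorem~\ref{thm-C-st-corr}; the only thing to discard is your fallback, because submultiplicativity of the \(I\)-norm (Observation~\ref{obs-I-norm}) controls \(\norm{f*_L\xi}_I\), not \(\phi(\inpro{f*_L\xi}{f*_L\xi})\) in terms of \(\phi(\inpro{\xi}{\xi})\), so it cannot replace the disintegration step.
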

 \begin{remark}\label{remk-notation-L}
  We denote the left action of \(\Cst(G;\A)\) on \(Y\) in Theorem~\ref{thm-C-st-corr} by \(L\), that is, \(L\colon \Cst(G;\A) \to \Bound(Y)\) is a homomorphism, where \(\Bound(Y)\) denotes the set of all adjointable operators on \(Y\).
  \end{remark}
  
  We denote 
  \[
  \B_e =\overline{\{f\in \Cc(G;\A) : \supp(f) \subset G_{e}\}} \subset \Cst(G;\A)
  \]
  where \(e\) is the identity element of \(\Gamma\) and the above closure is taken with respect to the \(\Cst\)\nb-norm of \(\Cst(G;\A)\). We now prove the main theorem of this section.
  
  \begin{theorem}\label{thm-isometric-isomorphism}
  	Let \(L\colon \Cst(G;\A) \to \Bound(Y)\) be the homomorphism of Remark~\ref{remk-notation-L}. Then \(L|_{B_e}\colon B_e\to \Bound(Y)\) is injective and the inclusion map \(\iota\colon  \Cc(G_e; \A|_{G_e}) \to \Cc(G;\A)\) given by 
  	\[
  	\iota(f)(\gamma) = \begin{cases}
  		f(\gamma) & \textup{if } \gamma \in G_e,\\
  		0 & \textup{ otherwise }
  	\end{cases}
  	\]
  	extends to an isomorphism from \(\Cst(G_e;\A|_{G_e}) \to \B_e\).
  \end{theorem}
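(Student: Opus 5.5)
The plan is to prove two norm inequalities for \(f\in \Cc(G_e;\A|_{G_e})\), namely \(\norm{\iota(f)}_{\Cst(G;\A)}\le \norm{f}_{\Cst(G_e;\A|_{G_e})}\) and \(\norm{f}_{\Cst(G_e;\A|_{G_e})}\le \norm{L(\iota(f))}\). Since \(L\) is a \(^*\)\nb-homomorphism, \(\norm{L(\iota(f))}\le \norm{\iota(f)}_{\Cst(G;\A)}\), so the two inequalities together force
\[
\norm{f}_{\Cst(G_e;\A|_{G_e})}=\norm{\iota(f)}_{\Cst(G;\A)}=\norm{L(\iota(f))}.
\]
Once this holds, \(\iota\) extends to an isometric \(^*\)\nb-homomorphism of \(\Cst(G_e;\A|_{G_e})\) onto \(\B_e\); surjectivity holds because the image is closed and contains a dense subset of \(\B_e\). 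The map \(L|_{\B_e}\) is then isometric on a dense subset, hence on all of \(\B_e\), and in particular injective. That \(\iota\) is a \(^*\)\nb-homomorphism on compactly supported sections is immediate: \(G_e\) is a clopen subgroupoid carrying the restricted Haar system, so for \(f,g\) supported in \(G_e\) the convolution over \(G\) only integrates over \(G_e\).

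For the first inequality, let \(\pi\) be any \(I\)\nb-norm bounded representation of \(\Cc(G;\A)\). Then \(\pi\circ\iota\) is a representation of \(\Cc(G_e;\A|_{G_e})\), and it is \(I\)\nb-norm bounded because the \(I\)\nb-norm of \(\iota(f)\) computed in \(G\) equals the \(I\)\nb-norm of \(f\) computed in \(G_e\); the integrals only see \(G_e\), and \(\lambda^u\) restricts. Hence \(\norm{\pi(\iota(f))}\le\norm{f}_{\Cst(G_e;\A|_{G_e})}\), and taking the supremum over \(\pi\) gives the inequality. Nondegeneracy is not an issue here: one may restrict to the essential subspace, or note that the universal norm is attained by bounded representations in the disintegration sense.

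For the second inequality, I would compare \(L(\iota(f))\) with left multiplication inside the full Hilbert module \(\Cst(G_e;\A|_{G_e})\). For \(f\in\Cc(G_e;\A|_{G_e})\) and \(\eta\in\Cc(G;\A)\), the section \(f*_L\eta\) preserves each \(\Gamma\)\nb-component, since \((\iota(f)*\eta)_\gamma=\iota(f)*\eta_\gamma\) by the cocycle property. Take \(\eta=\widetilde{u}\) with \(u\in\Cc(G_e;\A|_{G_e})\). Then \(\inpro{\widetilde u}{\widetilde u}=u^**u\) and \(\inpro{\iota(f)*\widetilde u}{\iota(f)*\widetilde u}=u^**f^**f*u\), both computed in \(\Cst(G_e;\A|_{G_e})\). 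So the closure of \(\Cc(G_e;\A|_{G_e})\) inside \(Y\) is a copy of the standard module \(\Cst(G_e;\A|_{G_e})_{\Cst(G_e;\A|_{G_e})}\), it is invariant under \(L(\iota(f))\), and on it \(L(\iota(f))\) acts as left multiplication by \(f\). Left multiplication on the standard module is isometric, that is \(\norm{f}=\sup_{\norm{u}\le1}\norm{f u}\) (use an approximate unit \(u_\alpha\)). This gives \(\norm{f}_{\Cst(G_e)}\le\norm{L(\iota(f))}\).

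I expect the main obstacle to be identifying this copy of \(\Cst(G_e;\A|_{G_e})\) inside \(Y\) carefully, in particular checking that the inner product restricted to \(\Cc(G_e;\A|_{G_e})\) agrees with the \(\Cst(G_e)\)\nb-product, so that the norms match; this is essentially the \(\gamma=e\) term of the defining sum for \(\inpro{\cdot}{\cdot}\). The approximate unit argument must also use norms in \(\Cst(G_e;\A|_{G_e})\) rather than in \(\Cst(G;\A)\), which is exactly why the module \(Y\) is needed instead of arguing directly in \(\Cst(G;\A)\).
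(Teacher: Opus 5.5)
Your proposal is correct and follows essentially the paper's proof. The paper also gets $\norm{\iota(f)}\le\norm{f}$ from preservation of the $I$\nb-norm. It gets the reverse bound from the fact that the $Y$\nb-inner product on $G_e$\nb-supported sections is the $\Cst(G_e;\A|_{G_e})$\nb-product, which yields the chain $\norm{f}\le\norm{L(\iota(f))}\le\norm{\iota(f)}\le\norm{f}$. The only cosmetic difference is that the paper evaluates $L(\iota(f))$ on the single vector $\iota(f)^*/\norm{f}$, obtaining $\norm{ff^*}/\norm{f}=\norm{f}$, rather than taking a supremum over the unit ball or an approximate unit.
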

 \begin{proof}
 	Note that the map  \(\iota\colon  \Cc(G_e; \A|_{G_e}) \to \Cc(G;\A)\) is a homomorphism of \(^*\)\nb-algebras and it preserve the \(I\)\nb-norm. Therefore, \(\iota\) can be extended to a norm decreasing \(^*\)\nb-homomorphism \(\bar{\iota}\colon \Cst(G_e;\A|_{G_e}) \to \Cst(G;\A)\). Thus, for \(f\in \Cc(G_e;\A|_{G_e})\), we have \(\norm{\iota(f)} \leq \norm{f}\). Now 
 	\[
 	 \norm{\iota(f)}^2_{\Cst(G_e;\A|_{G_e})} = \norm{\inpro{\iota(f)}{\iota(f)}} =\norm{\iota(f)^*_e\iota(f)_e} = \norm{f^*f} =\norm{f}^2
 	\]
 	for \(f\in \Cc(G_e;\A|_{G_e})\). The second equality above follows because the support of \(\iota(f)\) is contained in~\(G_e\). 
 	If \(f\) is a nonzero section in \(\Cc(G_e;\A|_{G_e})\), then
 	\begin{align*}
 		\norm{L(\iota(f))} &=\sup\big\{\norm{\iota(f)g}_{\Cst(G_e;\A|_{G_e})} : g\in Y \textup{ with } \norm{g}\leq 1\big \}\\
 		&\geq \norm{\iota(f)\iota(f)^*}_{\Cst(G_e;\A|_{G_e})}/\norm{f} = \norm{ff^*}/\norm{f} =\norm{f}.
 	\end{align*}
 	Since \(L\) is norm-decreasing (by Theorem~\ref{thm-C-st-corr}), we have 
 \[
 \norm{f}\leq \norm{L(\iota(f))} \leq \norm{\iota(f)} \leq \norm{f}.
 \]
 Thus, the above line ensures that \(\iota\) can be extended to an isometric map \(\bar{\iota}\) and the restriction of \(L\) on \(\B_e\) is an isometry.
  \end{proof}
  
  Our next goal is to show that the Fell bundle \(\Cst\)\nb-algebra \(\Cst(G;\A)\) is a topological grading in the sense of Exel~\cite[Definition 19.2]{Exel2017Book-Partial-action-Fell-bundle}. 
 We now recall the definition of topological grading from~\cite{Exel2017Book-Partial-action-Fell-bundle}. A \(\Cst\)\nb-algebra \(A\) is called \emph{topological grading} over a discrete group~\(\Gamma\) if there exists a collection of closed subspace \(\{A_{\gamma} : \gamma \in \Gamma\}\) of \(A\) such that 
 \begin{enumerate}
 	\item \(A_\gamma A_{\eta} \subset A_{\gamma \eta}\) for \(\gamma, \eta\in \Gamma\);
 	\item \(A_{\gamma}^* = A_{\gamma^{-1}}\) for all \(\gamma\in \Gamma\);
 	\item the linear span of \(\{A_{\gamma} : \gamma \in \Gamma\}\) is dense in \(A\);
 	\item \label{cond:top-grading-4} there is a bounded linear map \(E \colon A \to A\) such that \(E = \textup{id}\) on \(A_{e}\) and \(E = 0\) on \(A_{\gamma}\) for \(\gamma\neq e\).
 \end{enumerate}
  \noindent If \(A\) is a topological grading over a discrete group, then Exel proved~\cite[Theorem~19.1]{Exel2017Book-Partial-action-Fell-bundle} that the map in Condition~\ref{cond:top-grading-4} of the above definition is a \emph{conditional expectation} from \(A\) onto \(A_e\) and the collection \(\{A_{\gamma} : \gamma \in \Gamma\}\) is linearly independent.

\begin{proposition}\label{prop-linear-cont}
	Let \(p\colon \A\to G\) be a Fell bundle over a locally compact Hausdorff second countable groupoid equipped with a Haar system. If \(c\colon G\to \Gamma\) is a continuous \(1\)\nb-cocycle, then there exists a linear contraction \(E\colon \Cst(G;\A) \to \Cst(G_{e}; \A|_{G_e})\) given by \(E(f) = f|_{G_e}\) for \(f\in \Cc(G;\A)\).
\end{proposition}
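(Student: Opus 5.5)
The plan is to realise \(E\) as a compression using the \(\Cst\)\nb-correspondence \(Y\) of Theorem~\ref{thm-C-st-corr} together with the isometric inclusion of Theorem~\ref{thm-isometric-isomorphism}. The aim is to write
\[
E(f) = \inpro{\xi}{L(f)\eta}
\]
for fixed vectors \(\xi,\eta\) built from an approximate unit, or as a limit of such expressions. A map of this form is automatically bounded by \(\norm{f}\) once \(\norm{\xi},\norm{\eta}\le 1\).

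First, fix a self-adjoint approximate identity \((u_\alpha)\) of \(\Cst(G_e;\A|_{G_e})\) with \(u_\alpha\in\Cc(G_e;\A|_{G_e})\) and \(\norm{u_\alpha}\le1\), as in Lemma~\ref{lem-inner-prod-positive}(2). Regard \(\widetilde{u_\alpha}=\iota(u_\alpha)\) as an element of \(Y\). Its \(Y\)-norm is \(\norm{\inpro{\widetilde{u_\alpha}}{\widetilde{u_\alpha}}}^{1/2}=\norm{u_\alpha^*u_\alpha}^{1/2}\le 1\), by the same computation as in the proof of Theorem~\ref{thm-isometric-isomorphism}.

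Next, for \(f\in\Cc(G;\A)\), compute \(\inpro{\widetilde{u_\alpha}}{f*_L\widetilde{u_\alpha}}\) directly from \eqref{eq:inner-prod}. Because \(\widetilde{u_\alpha}\) is supported in \(G_e\), only the \(\gamma=e\) term of the sum survives. Moreover \((f*\widetilde{u_\alpha})_e=f_e*u_\alpha\), since by the cocycle property \(c(xt)=c(x)\) for \(t\in G_e\). Hence
\[
\inpro{\widetilde{u_\alpha}}{L(f)\widetilde{u_\alpha}} = u_\alpha^* * f|_{G_e} * u_\alpha,
\]
where the convolution on the right takes place in \(\Cc(G_e;\A|_{G_e})\). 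This is the analogue of Observation~\ref{obs-corr-1}. The Cauchy--Schwarz inequality for Hilbert modules then gives
\[
\norm{u_\alpha f|_{G_e} u_\alpha}_{\Cst(G_e;\A|_{G_e})}\le \norm{L(f)}\,\norm{\widetilde{u_\alpha}}^2\le \norm{f}_{\Cst(G;\A)}.
\]

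Letting \(\alpha\to\infty\), we have \(u_\alpha f|_{G_e}u_\alpha\to f|_{G_e}\) in \(\Cst(G_e;\A|_{G_e})\), because \(f|_{G_e}\) lies in that algebra and \((u_\alpha)\) is an approximate unit. Therefore
\[
\norm{f|_{G_e}}\le\norm{f}
\]
for every \(f\in\Cc(G;\A)\). The map \(f\mapsto f|_{G_e}\) is clearly linear on \(\Cc(G;\A)\), so it extends by density to a linear contraction \(E\colon \Cst(G;\A)\to\Cst(G_e;\A|_{G_e})\). Composing with the isometry \(\bar\iota\) of Theorem~\ref{thm-isometric-isomorphism} also yields the map into \(\B_e\subset \Cst(G;\A)\) needed for the grading.

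The main obstacle is the middle computation. One must check carefully that the inner product with \(\widetilde{u_\alpha}\) on both sides picks out exactly \(u_\alpha^**f_e*u_\alpha\). This requires tracking the supports \(\supp(f*\widetilde{u_\alpha})\cap G_\gamma\) via the cocycle and applying a Fubini/change-of-variables argument like the one in Observation~\ref{obs-corr-1}(2).
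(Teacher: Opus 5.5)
Your proof is correct, and it reaches the bound by a somewhat different route from the paper's.

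The paper proves the chain \(\norm{f|_{G_e}}\le \norm{\inpro{f}{f}}^{1/2}\le\norm{L(f)}\le\norm{f}\). The first inequality comes from the order relation \(\inpro{f}{f}=\sum_{\gamma}f_\gamma^*f_\gamma\ge f_e^*f_e\ge 0\) in \(\Cst(G_e;\A|_{G_e})\). The second comes from \(\norm{L(f)\iota(u_\alpha)}_Y^2=\norm{u_\alpha\inpro{f}{f}u_\alpha}\to\norm{\inpro{f}{f}}\).

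You instead take a single matrix coefficient, \(\inpro{\widetilde{u_\alpha}}{L(f)\widetilde{u_\alpha}}=u_\alpha f_e u_\alpha\), and apply Cauchy--Schwarz. Your support computation \((f*\widetilde{u_\alpha})_e=f_e*u_\alpha\) is right: for \(y\in G_e\), the integrand \(f(x)\widetilde{u_\alpha}(x^{-1}y)\) vanishes unless \(c(x)=e\).

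Both arguments use the same inputs: the Hilbert module \(Y\) with its bounded left action \(L\), and an approximate unit of norm at most \(1\) inside \(\Cc(G_e;\A|_{G_e})\).

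Your version has two advantages. It skips the order comparison between \(\inpro{f}{f}\) and \(f_e^*f_e\). It also exhibits \(E\) as a point-norm limit of the uniformly contractive compressions \(f\mapsto\inpro{\widetilde{u_\alpha}}{L(f)\widetilde{u_\alpha}}\) (extend from \(\Cc(G;\A)\) by an \(\varepsilon/3\) argument). So \(E\) is visibly completely positive without appealing to Exel's theorem.

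The paper's route gives a by-product yours does not: \(\norm{\inpro{f}{f}}^{1/2}\le\norm{f}_{\Cst(G;\A)}\). In other words, the identity on \(\Cc(G;\A)\) extends to a contraction \(\Cst(G;\A)\to Y\).
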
 
 \begin{proof}
 	For \(f\in \Cc(G;\A)\), we have 
 	\begin{equation}\label{liner-cont-eq-1}
  \norm{f}^2_{\Cst(G_e;\A|_{G_e})} = \bigr \| \sum_{\gamma \in \Gamma} f^*_{\gamma} f_{\gamma} \bigr\| \geq \norm{f^*_ef_e} = \norm{f|_{G_e}}^2,
 \end{equation}
 	where the inequality of the above line follows from \(\sum_{\gamma \in \Gamma}f^*_{\gamma}f_{\gamma} \geq f^*_ef_e \geq 0\). Choose a self-adjoint approximation identity \((u_\alpha)_{\alpha}\) for \(\Cst(G_e;\A|_{G_e})\) in \(\Cc(G_e;\A|_{G_e})\) (such approximation identity exists by~\cite[Proposition 6.10]{Muhly-Williams2008Equivalence-and-disinte-thm-Fell-bundle}). Since \(\norm{\iota(u_{\alpha})} \leq 1\), we have 
 	\begin{multline*}
 	\norm{L(f)}^2 \geq \norm{L(f)(\iota(u_{\alpha}))}_{\Cst(G_e;\A|_{G_e})}^2  = \norm{f*_L\iota(u_{\alpha})}_{\Cst(G_e;\A|_{G_e})}^2\\
 	 = \norm{ u^*_{\alpha}\inpro{f}{f} u_{\alpha}} \to \norm{f}^2_{\Cst(G_e;\A|_{G_e})} \quad \textup{as } \alpha \to \infty. 
 	\end{multline*}
 	Equation~\eqref{liner-cont-eq-1}, Theorem~\ref{thm-C-st-corr} and the above computation gives us 
 	\[
 	 \norm{f|_{G_e}} \leq \norm{f}_{\Cst(G_e;\A|_{G_e})} \leq \norm{L(f)}\leq \norm{f}.
 	\]
 	Therefore, the restriction extends to a bounded linear contraction \(E\).
 \end{proof}

 \begin{proposition}\label{prpo-graded-algebra-st}
 	Let \(p\colon \A\to G\) be a Fell bundle over a locally compact Hausdorff second countable groupoid equipped with a Haar system. Let \(\Gamma\) be a discrete group with a continuous \(1\)\nb-cocycle \(c\colon G\to \Gamma\). For \(\gamma \in \Gamma\), set 
 	\[
 	 \B_{\gamma}= \overline{\{f\in \Cc(G;\A) : \supp(f) \subseteq G_{\gamma}\}} \subseteq \Cst(G;\A),
 	\]
 	where the closure is taken with respect to the \(\Cst\)\nb-norm.
 	Then \(\Cst(G;\A)\) is a topologically graded \(\Cst\)\nb-algebra. The grading subspaces are \(\B= \{\B_{\gamma}\}_{\gamma \in \Gamma}\) and the conditional expectation \(E\colon \Cst(G;\A) \to \Cst(G;\A)\) induced by the restriction of sections to \(G_e\).
 \end{proposition}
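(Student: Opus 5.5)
The plan is to verify Exel's four conditions for the subspaces \(\B_\gamma\), obtaining \(E\) by composing Proposition~\ref{prop-linear-cont} with the isometric embedding of Theorem~\ref{thm-isometric-isomorphism}.

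For condition (1), take \(f,g\in\Cc(G;\A)\) with \(\supp(f)\subseteq G_\gamma\) and \(\supp(g)\subseteq G_\eta\). In
\[
f*g(y)=\int_G f(x)g(x^{-1}y)\,\dd\lambda^{r(y)}(x),
\]
the integrand is nonzero only if \(x\in G_\gamma\) and \(x^{-1}y\in G_\eta\). Since \(c\) is a \(1\)-cocycle, this forces \(c(y)=c(x)c(x^{-1}y)=\gamma\eta\), so \(f*g\) is supported in \(G_{\gamma\eta}\). Multiplication in \(\Cst(G;\A)\) is jointly continuous, so passing to closures gives \(\B_\gamma\B_\eta\subseteq\B_{\gamma\eta}\). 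Condition (2) works the same way: \(f^*(g)=f(g^{-1})^*\) and \(c(g^{-1})=c(g)^{-1}\), so \(f^*\) is supported in \(G_{\gamma^{-1}}\). The involution is isometric, hence \(\B_\gamma^*=\B_{\gamma^{-1}}\). Condition (3) follows from the decomposition \(f=\sum_{\gamma\in F}\widetilde{f_\gamma}\) with \(F\) finite, recorded at the start of this section. Each \(\widetilde{f_\gamma}\) is continuous because \(G_\gamma\) is clopen, and it lies in \(\Cc(G;\A)\) with support in \(G_\gamma\). So \(\Cc(G;\A)\subseteq\textup{span}\bigcup_\gamma\B_\gamma\), and \(\Cc(G;\A)\) is dense in \(\Cst(G;\A)\).

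For condition (4), let \(E_0\colon\Cst(G;\A)\to\Cst(G_e;\A|_{G_e})\) be the contraction of Proposition~\ref{prop-linear-cont}, and set \(E\defeq\bar\iota\circ E_0\colon\Cst(G;\A)\to\Cst(G;\A)\). Here \(\bar\iota\) is the isometric extension of \(\iota\) from Theorem~\ref{thm-isometric-isomorphism}, whose image is \(\B_e\). Then \(E\) is a bounded linear contraction with \(E(f)=\widetilde{f_e}\) for \(f\in\Cc(G;\A)\). If \(\supp(f)\subseteq G_e\), then \(E(f)=f\). If \(\supp(f)\subseteq G_\gamma\) with \(\gamma\neq e\), then \(f|_{G_e}=0\), so \(E(f)=0\).

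The remaining step, and the main point needing care, is to pass from these dense subspaces to the closures \(\B_e\) and \(\B_\gamma\). Every element of \(\B_\gamma\) is a \(\Cst\)-norm limit of sections \(f_n\in\Cc(G;\A)\) supported in \(G_\gamma\). Since \(E\) is continuous in the \(\Cst\)-norm of \(\Cst(G;\A)\), the identities \(E=\textup{id}\) on \(\B_e\) and \(E=0\) on \(\B_\gamma\) for \(\gamma\neq e\) extend to the closures. This works because Proposition~\ref{prop-linear-cont} was proved with respect to the full \(\Cst(G;\A)\)-norm and not merely the \(I\)-norm, so the extension is legitimate.

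Finally, \(E\) takes values in \(\B_e\), and Exel's Theorem~19.1 in~\cite{Exel2017Book-Partial-action-Fell-bundle} then upgrades \(E\) to a conditional expectation onto \(\B_e\cong\Cst(G_e;\A|_{G_e})\) and gives the linear independence of \(\{\B_\gamma\}\).
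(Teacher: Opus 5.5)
Your proposal is correct and follows the paper's proof essentially step by step. You use the cocycle property on supports for conditions (1)--(2), the finite decomposition \(f=\sum_{\gamma}\widetilde{f_\gamma}\) for density, and \(E=\bar\iota\circ E_0\) built from Proposition~\ref{prop-linear-cont} and Theorem~\ref{thm-isometric-isomorphism} for condition (4). Your explicit norm-continuity step, extending \(E=\id\) on \(\B_e\) and \(E=0\) on \(\B_\gamma\) (\(\gamma\neq e\)) from the dense subspaces of \(\Cc(G;\A)\) to their closures, is a detail the paper leaves implicit.
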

 \begin{proof}
 	Let \(\gamma, \eta \in \Gamma\) and let \(f,g\in \Cc(G;\A)\) with \(\supp(f)\subseteq G_\gamma\) and \(\supp(g) \subseteq G_{\eta}\). Since \(c\) is a \(1\)\nb-cocycle, it follows that \(\supp(f)\supp(g) \subseteq G_{\gamma} G_{\eta} \subseteq G_{\gamma\eta}\). Thus, \(\B_{\gamma}\B_{\eta} \subseteq \B_{\gamma \eta}\). Moreover, since \(\supp(f^*)\subseteq G_{\gamma^{-1}}\), we have \(\B^*_{\gamma} = \B_{\gamma^{-1}}\). 
 	
 	Let \(f\in \Cc(G;\A)\). We write \(f=\sum_{\gamma \in F}f_{\gamma}\) for some finite subset \(F\) of \(\Gamma\). Now we can represent \(f\) as \(f= \sum_{\gamma \in F}f_{\gamma} =\sum_{\gamma \in F}\iota_{\gamma}(f_{\gamma})\), where \(\iota_{\gamma}\colon \Cc(G_{\gamma};\A|_{G_\gamma}) \hookrightarrow \Cc(G;\A)\) is the inclusion. Therefore, \(\Cc(G;\A)\) is contained in \(\textup{span}\{\B_{\gamma} : \gamma \in \Gamma\}\), and hence \(\textup{span}\{\B_{\gamma} : \gamma \in \Gamma\}\) is dense in \(\Cst(G;\A)\).
 	
 	 Recall from Proposition~\ref{prop-linear-cont}, that the map \(E\colon \Cst(G;\A) \to \Cst(G_e;\A|_{G_e})\) defined by \(E(f) =f|_{G_e}\) is a linear contraction.  Composing the isometric isomorphism \(\overline{\iota}\colon \Cst(G_e;\A|_{G_e}) \to \Cst(G;\A)\) obtained in Theorem~\ref{thm-isometric-isomorphism} with \(E\), we obtain a contraction \(\widetilde{E} = \overline{\iota}\circ E\colon \Cst(G;\A) \to \Cst(G;\A)\). The map \(\widetilde{E}\) satisfies \(\widetilde{E} =0\) on \(\B_{\gamma}\) for \(\gamma \neq e\) and \(\widetilde{E} =\id\) on \(\B_e\). Thus, \(\widetilde{E}\) is a conditional expectation, and hence \(\Cst(G;\A)\) is a topologically graded \(\Cst\)\nb-algebra. 
 \end{proof}

 \begin{theorem}\label{thm-Fell-bund-as-graded-alg}
 	Let \(p\colon \A\to G\) be a Fell bundle over a locally compact Hausdorff second countable groupoid equipped with a Haar system. Let \(\Gamma\) be a discrete group with a continuous \(1\)\nb-cocycle \(c\colon G\to \Gamma\). Consider the group Fell bundle \(\B =\{\B_{\gamma}\}_{\gamma \in \Gamma}\) as described above. 
 	Then the group Fell bundle algebra \(\Cst(\Gamma; \B)\) is isomorphic to \(\Cst(G;\A)\).
 \end{theorem}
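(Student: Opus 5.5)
We will exhibit mutually inverse \(^*\)\nb-homomorphisms between \(\Cst(\Gamma;\B)\) and \(\Cst(G;\A)\), using the universal property of \(\Cst(\Gamma;\B)\) in one direction and the universal property of \(\Cst(G;\A)\) together with Exel's theory of topological gradings in the other. Recall that \(\Cst(\Gamma;\B)\) is the enveloping \(\Cst\)\nb-algebra of the \(^*\)\nb-algebra \(\Cc(\Gamma;\B)\) of finitely supported sections, with \(\Cst\)\nb-seminorms bounded by \(\ell^1\)\nb-norms. The bundle structure on \(\B\) comes from Proposition~\ref{prpo-graded-algebra-st}: the fibres are the closed subspaces \(\B_\gamma\subseteq\Cst(G;\A)\), with multiplication and involution inherited from \(\Cst(G;\A)\). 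By that proposition, \(\B_\gamma\B_\eta\subseteq\B_{\gamma\eta}\) and \(\B_\gamma^*=\B_{\gamma^{-1}}\), so \(\B\) is a Fell bundle over \(\Gamma\). In particular \(\B_e\cong\Cst(G_e;\A|_{G_e})\) by Theorem~\ref{thm-isometric-isomorphism}.

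\emph{Step 1: the map \(\Psi\colon\Cst(\Gamma;\B)\to\Cst(G;\A)\).} We define \(\Psi\) on \(\Cc(\Gamma;\B)\) by \(\Psi(b)=\sum_\gamma b_\gamma\). Since the operations on \(\B\) are those of \(\Cst(G;\A)\), this is a \(^*\)\nb-homomorphism, i.e.\ a representation of the bundle \(\B\) inside \(\Cst(G;\A)\). By the universal property it extends to \(\Psi\colon\Cst(\Gamma;\B)\to\Cst(G;\A)\). Its range contains \(\Cc(G;\A)\), because every \(f\in\Cc(G;\A)\) decomposes as \(f=\sum_{\gamma\in F}f_\gamma\) with \(f_\gamma\in\B_\gamma\). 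Hence \(\Psi\) is surjective.

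\emph{Step 2: injectivity.} This is the main obstacle, and we plan to handle it through Exel's results on topological gradings. By Proposition~\ref{prpo-graded-algebra-st}, \(\Cst(G;\A)\) is topologically graded by \(\B\), with conditional expectation \(\widetilde E\). Exel's theory then says that the identity on fibres induces surjections
\[
\Cst(\Gamma;\B)\to\Cst(G;\A)\to\Cst_r(\Gamma;\B).
\]
So \(\Cst(G;\A)\) lies between the full and reduced cross-sectional algebras, and a further argument is needed to get equality with the full one.

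We plan to supply it by constructing an inverse \(\Phi\colon\Cst(G;\A)\to\Cst(\Gamma;\B)\) directly. On \(\Cc(G;\A)\) we set \(\Phi(f)=\sum_\gamma j_\gamma(f_\gamma)\), where \(j_\gamma\) is the canonical inclusion of the fibre \(\B_\gamma\) in \(\Cst(\Gamma;\B)\). One checks that \(\Phi\) is a \(^*\)\nb-homomorphism: the convolution of \(f_\gamma\) and \(g_\eta\) is supported in \(G_{\gamma\eta}\) and agrees with the bundle product. It remains to show that \(\Phi\) is bounded for the universal norm of \(\Cst(G;\A)\). Take a faithful representation \(\rho\) of \(\Cst(\Gamma;\B)\). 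Then \(\rho\circ\Phi\) is a representation of \(\Cc(G;\A)\), and we estimate
\[
\|\rho\Phi(f)\|\le\sum_{\gamma\in F}\|f_\gamma\|_{\Cst(G;\A)}\le\sum_{\gamma\in F}\|f_\gamma\|_I\le|F|\,\|f\|_I .
\]
This bound is not uniform in \(f\), so we instead verify the continuity requirement of the disintegration theorem~\cite[Theorem 4.13]{Muhly-Williams2008Equivalence-and-disinte-thm-Fell-bundle}. If \(f_n\to f\) in the inductive limit topology, then all supports lie in a fixed compact set, which meets only a fixed finite set \(F\) of the clopen sets \(G_\gamma\). Hence each \((f_n)_\gamma\to f_\gamma\) in the inductive limit topology, and so \(\rho\Phi(f_n)\to\rho\Phi(f)\). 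Nondegeneracy of \(\rho\circ\Phi\) follows from that of \(\rho\) together with the density of \(\Cc(G;\A)\) in each fibre \(\B_\gamma\). The disintegration theorem then shows that \(\rho\circ\Phi\) is bounded by the universal norm, so \(\Phi\) extends to \(\Cst(G;\A)\).

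\emph{Step 3: the maps are inverse.} On generators, \(\Psi\circ\Phi=\id\) on \(\Cc(G;\A)\) and \(\Phi\circ\Psi=\id\) on each \(j_\gamma(\Cc(G_\gamma;\A|_{G_\gamma}))\). These sets are dense, and all maps are continuous, so \(\Psi\) and \(\Phi\) are mutually inverse isomorphisms. The delicate point throughout is the boundedness of \(\Phi\) in Step 2. If the pre-representation route runs into difficulties with the inductive limit topology on \(\Cst(\Gamma;\B)\)-valued maps, we will fall back on the fact that \(\Cst(G;\A)\), being topologically graded, is a quotient of \(\Cst(\Gamma;\B)\). The task then reduces to showing that the kernel of \(\Psi\) is trivial, which again uses the universality of the \(I\)\nb-norm completion.
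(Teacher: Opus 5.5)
Your proposal is correct and is essentially the paper's argument. The paper checks the universal property of \(\Cst(\Gamma;\B)\) directly: for an arbitrary representation \(\{\pi_\gamma\}\) of \(\B\) it sends \(f=\sum_\gamma f_\gamma\in\Cc(G;\A)\) to \(\sum_\gamma\pi_\gamma(f_\gamma)\) and extends via the Muhly--Williams disintegration theorem, which is your Step~2 applied to the universal representation \(\{j_\gamma\}\). Your treatment of boundedness is more careful than the paper's: the paper asserts a global bound \(\|\pi(f)\|\le\|f\|_I\), which the fibrewise estimates \(\|\pi_\gamma(f_\gamma)\|\le\|f_\gamma\|_I\) do not yield, whereas your route closes that step correctly by taking the local bound \(|F|\,\|f\|_I\) on sections supported in a fixed compact set and feeding it into the inductive-limit continuity hypothesis of the disintegration theorem.
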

 \begin{proof}
 	Define a representation (see~\cite[Definition 16.20]{Exel2017Book-Partial-action-Fell-bundle}) \(\{\id_{\gamma}\}_{\gamma \in \Gamma}\) of \(\B\) on \(\Cst(G;\A)\) given by the identity maps \(\id_{\gamma} \colon \B_{\gamma} \to \Cst(G;\A)\). By Proposition~\ref{prpo-graded-algebra-st}, the algebra \(\Cst(G;\A)\) generated by \(\textup{span}\{\id_{\gamma}(f) : f\in \B_{\gamma}, \gamma \in \Gamma\}\). To show \(\Cst(G;\A)\) satisfies the universal property of \(\Cst(\Gamma; \B)\), let  \(\{\pi_{\gamma}\}_{\gamma \in \Gamma}\) be a representation of the group Fell bundle \(\B\) on a \(\Cst\)\nb-algebra \(M\). We aim to construct a representation \(\pi\colon \Cst(G;\A) \to M\) such that \(\pi\circ \id_{\gamma} = \pi_{\gamma}\) for \(\gamma \in \Gamma\). Define \(\pi\colon \Cc(G;\A) \to M \) by 
 		\[
 		 \pi(f) =\sum_{\gamma \in \Gamma}\pi_{\gamma}(f_{\gamma}) 
 		\]
 		for \(f = \sum_{\gamma \in \Gamma}f_{\gamma} \in \Cc(G;\A)\). By definition, \(\pi\) is multiplicative and preserves the involution. We now show that \(\pi\) is continuous in the inductive limit topology, and hence extends to a \(^*\)\nb-homomorphism from \(\Cst(G;\A) \to M\). Since we can identify \(\Cst(G_e;\A|_{G_e})\) with \(\B_e\) by the isometric isomorphism \(\overline{\iota}\colon \Cst(G_e;\A|_{G_e}) \to \B_e\) of Theorem~\ref{thm-isometric-isomorphism}, we have a \(^*\)\nb-homomorphism \(\pi_e\colon \Cst(G_e; \A|_{G_e})\to M\). Since \(\Cst(G_e; \A|_{G_e})\) is a Fell bundle algebra, \(\pi_e\) is \(I\)\nb-norm bounded on \(\Cc(G_e; \A|_{G_e})\) by~\cite[Theorem 4.13]{Muhly-Williams2008Equivalence-and-disinte-thm-Fell-bundle}. For \(f\in \Cc(G;\A)\cap \B_{\gamma}\), we have 
 		\[
 		 \norm{\pi_{\gamma}(f)}^2= \norm{\pi_{\gamma}(f)^*\pi_{\gamma}(f)} =\norm{\pi_e({f^*f})} \leq \norm{f^*f}_I\leq \norm{f^*}_I\norm{f}=\norm{f}^2_I
 		\]
 		The second inequality of the above line follows from Observation~\ref{obs-I-norm}. Since \(\textup{span}\{\B_{\gamma} :\gamma \in \Gamma\}\) is dense in \(\Cc(G;\A)\), \(\pi\) is \(I\)\nb-norm bounded on \(\Cc(G;\A)\). By the disintegration theorem (\cite[Theorem 4.13 ]{Muhly-Williams2008Equivalence-and-disinte-thm-Fell-bundle}), \(\pi\) is continuous in the inductive limit topology. Since \(\pi\circ \id_{\gamma} =\pi_{\gamma}\) holds on \(\Cc(G;\A)\), the equality extends to \(\Cst(G;\A)\). Therefore, \(\Cst(G;\A)\) has the universal property of \(\Cst(\Gamma; \B)\), and hence the two \(\Cst\)\nb-algebras \(\Cst(G;\A)\) and \(\Cst(\Gamma; \B)\) are isomorphic.
 	 \end{proof}
 	 \begin{remark}
 	 	Let \(p\colon \A\to G\) be a Fell bundle over an \emph{amenable} \etale\ groupoid \(G\) (see~\cite{Ananthraman-Renault2000Amenable-gpd} for the definition of amenable groupoid) with a continuous \(1\)\nb-cocycle \(c\colon G \to \Z\). Assume that the cocycle \(c\) is \emph{unperforated} in the sense of~\cite{Hossain-2026-Fell-bundle-alg-as-Cuntz-Pimsner}, that is, in some sense \(c^{-1}(e)\) generates~\(G\). Then~\cite[Theorem 3.9]{Hossain-2026-Fell-bundle-alg-as-Cuntz-Pimsner} shows that \(\Cst(G;\A)\) can be realized as the  Cuntz--Pimsner algebra of a \(\Cst\)\nb-correspondence constructed from the cocycle date~\(c\). Hence, Theorem~\ref{thm-Fell-bund-as-graded-alg} yields a topological grading of the Cuntz--Pimsner algebra constructed in~\cite{Hossain-2026-Fell-bundle-alg-as-Cuntz-Pimsner}.
 	 \end{remark}

 \section{Fell bundle algebras as coaction crossed products}\label{sec-coaction}
 
 In this section, we exploit the topological grading (equivalently, the group Fell bundle structure) of the section algebra \(\Cst(G;\A)\) established in Section~\ref{sec-Fell-bund-as-grad} to construct a coaction of the underlying discrete group on \(\Cst(G;\A)\). We then identify the crossed product by this coaction with the \(\Cst\)\nb-algebra of a Fell bundle over the \emph{skew-product groupoid} (see Theorem~\ref{thm-coaction-crossed-product}). For background on coactions of discrete groups and their crossed products, we refer the reader to Section~\ref{sec-cocation-dis}.

 \begin{proposition}\label{prop-coaction}
 Let \(p\colon \A\to G\) be a Fell bundle over a locally compact Hausdorff second countable groupoid equipped with a Haar system. Let \(\Gamma\) be a discrete group with a continuous \(1\)\nb-cocycle \(c\colon G\to \Gamma\). Then there exists a coaction \(\delta\) of \(\Gamma\) on \(\Cst(G;\A)\) such that 
 \begin{equation}\label{eq-coaction-prop-1}
 \delta(f_{\gamma}) = f_{\gamma}\otimes \gamma
\end{equation}
 where \(\supp(f_{\gamma}) \subseteq G_{\gamma}\).
 \end{proposition}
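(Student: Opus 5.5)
The coaction will come from the universal property of \(\Cst(G;\A)\cong\Cst(\Gamma;\B)\) in Theorem~\ref{thm-Fell-bund-as-graded-alg}, not from a direct construction on sections. I would define \(\delta\) on each fibre of the group Fell bundle \(\B=\{\B_\gamma\}_{\gamma\in\Gamma}\) by
\[
\delta_\gamma\colon \B_\gamma\to \Cst(G;\A)\otimes\Cst(\Gamma),\qquad \delta_\gamma(b)=b\otimes\gamma .
\]
Here \(\gamma\) denotes the canonical unitary in \(\Cst(\Gamma)\). Since \((b\otimes\gamma)(b'\otimes\eta)=bb'\otimes\gamma\eta\) and \((b\otimes\gamma)^*=b^*\otimes\gamma^{-1}\), the family \(\{\delta_\gamma\}\) is a representation of \(\B\) in the sense of~\cite[Definition 16.20]{Exel2017Book-Partial-action-Fell-bundle}. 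It is isometric on each fibre because \(\gamma\) is unitary. By the universal property established in the proof of Theorem~\ref{thm-Fell-bund-as-graded-alg}, there is a \(^*\)-homomorphism
\[
\delta\colon\Cst(G;\A)\to\Cst(G;\A)\otimes\Cst(\Gamma)\subseteq M\bigl(\Cst(G;\A)\otimes\Cst(\Gamma)\bigr)
\]
with \(\delta\circ\id_\gamma=\delta_\gamma\). This gives~\eqref{eq-coaction-prop-1} for \(f_\gamma\in\Cc(G;\A)\) supported in \(G_\gamma\).

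Next I would check the coaction identity and nondegeneracy on the dense subspace \(\textup{span}\{\B_\gamma\}\); both then extend by continuity. For the identity,
\[
(\delta\otimes\id)\delta(b_\gamma)=b_\gamma\otimes\gamma\otimes\gamma=(\id\otimes\delta_\Gamma)\delta(b_\gamma).
\]
For nondegeneracy, \(\delta(b_\gamma)(1\otimes\gamma^{-1}\eta)=b_\gamma\otimes\eta\). Hence \(\overline{\textup{span}}\{\delta(A)(1\otimes\Cst(\Gamma))\}\) contains every \(b_\gamma\otimes\eta\). By Proposition~\ref{prpo-graded-algebra-st} these elements span a dense subspace of \(\Cst(G;\A)\otimes\Cst(\Gamma)\), since elementary tensors with group elements span densely.

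The main obstacle is injectivity of \(\delta\), which the paper's definition of coaction requires. I would use the trivial representation \(1_\Gamma\colon\Cst(\Gamma)\to\C\), which sends each \(\gamma\) to \(1\). Then \((\id\otimes 1_\Gamma)\circ\delta\) agrees with the identity on each \(\B_\gamma\), hence on all of \(\Cst(G;\A)\) by density and continuity. So \(\delta\) has a left inverse and is injective; in fact it is isometric. One also needs the minimal tensor product \(\Cst(G;\A)\otimes\Cst(\Gamma)\) to admit the slice map \(\id\otimes 1_\Gamma\). This holds because \(1_\Gamma\) is a character and slice maps by states exist on the minimal tensor product.

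The topological grading of Proposition~\ref{prpo-graded-algebra-st} is not needed beyond density of \(\textup{span}\{\B_\gamma\}\). Alternatively, one could cite the standard correspondence between Fell bundles over discrete groups and coactions~\cite{Quigg-1996-Discrete-C-st-coaction-and-C-st-bundles} applied to \(\Cst(\Gamma;\B)\); the plan above is the hands-on version of that argument.
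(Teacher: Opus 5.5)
Your proposal is correct and follows the paper's proof essentially step by step. Like the paper, you extend $b\mapsto b\otimes\gamma$ through the universal property of $\Cst(\Gamma;\B)\cong\Cst(G;\A)$, check the coaction identity on the fibres $\B_\gamma$, verify nondegeneracy, and obtain injectivity from $(\id\otimes 1_\Gamma)\circ\delta=\id$. Your nondegeneracy step $\delta(b_\gamma)(1\otimes\gamma^{-1}\eta)=b_\gamma\otimes\eta$ is actually cleaner than the paper's, which needlessly inserts an approximate identity $u_\alpha$ in the first tensor factor.
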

 \begin{proof}
 	Note that the group Fell bundle algebra \(\Cst(\Gamma, \B)\) is isomorphic to \(\Cst(G;\A)\) by Theorem~\ref{thm-Fell-bund-as-graded-alg}. Thus, the map \(\delta \colon \Cc(\Gamma, \B) \to \Cst(\Gamma, \B)\otimes \Cst(\Gamma) \) given by Equation~\eqref{eq-coaction-prop-1} gives a \(^*\)\nb-homomorphism. Since the enveloping \(\Cst\)\nb-algebra of \(\Cc(\Gamma; \B)\) is \(\Cst(\Gamma; \B) \cong \Cst(G;\A)\), \(\delta\) extends to a unique \(^*\)\nb-homomorphism \(\delta \colon \Cst(G;\A) \to \Cst(G;\A)\otimes \Cst(\Gamma)\). We now verify the coaction identity on the generating elements~\(f_{\gamma}\): 
 	\begin{multline*}
 		(\delta\otimes \id_{G})\circ \delta (f_\gamma) = 	(\delta\otimes \id_{G})(f_{\gamma} \otimes \gamma) = (f_{\gamma}\otimes \gamma)\otimes \gamma = f_{\gamma} \otimes (\gamma\otimes \gamma) \\
 		=\id_{\Cst(G;\A)}\otimes \delta_{G}(f_\gamma \otimes \gamma)
 		= (\id_{\Cst(G;\A)} \otimes \delta_{G}) \circ \delta (f_\gamma).
 	\end{multline*}
 	Therefore, the coaction identity holds on \(\Cst(G;\A)\).
 	Choose an approximate identity \((u_\alpha)_{\alpha}\) of \(\Cst(G_e;\A|_{G_e})\) in \(\Cc(G_e;\A|_{G_e})\), then we have
 	\[
 	 \delta(f_\gamma) (u_\alpha\otimes \gamma^{-1}\eta) = f_{\gamma}u_{\alpha} \otimes \eta \to f_{\gamma}\otimes \eta \quad \textup{as } \alpha \to \infty.
 	\]
 	Therefore, the map \(\delta\) is nondegenerate in the sense 
 	\[
 	 \overline{\textup{span}} \{\delta(\Cst(G;
 	 \A))(1 \otimes \Cst(\Gamma))\} = \Cst(G;\A) \otimes \Cst(\Gamma),
 	\] 
 	where \(1\) is the unit of the multiplier algebra \(M(\Cst(G;\A))\). 
 	Let \(1_G\colon G \to \C\) be the trivial representation of \(G\). Then for \(f_{\gamma} \in \Cc(G;\A)\), we have 
 	\[
 	 (\id_{\Cst(G;\A)}\otimes 1_G)\circ \delta(f_\gamma) =  (\id_{\Cst(G;\A)}\otimes 1_G)(f_{\gamma} \otimes \gamma) = f_{\gamma}\otimes 1 = f_{\gamma} =\id_{\Cst(G;\A)}(f_{\gamma}).
 	 \]
 	 Thus, we have \((\id_{\Cst(G;\A)}\otimes 1_G)\circ \delta =\id_{\Cst(G;\A)}\). Hence,~\cite[Lemma A.24]{Echterhoff-Kaliszewski-Quigg-Raeburn-book-categorical-appro-to-imprimitivity-thm} ensures that \(\delta\) is injective.
  \end{proof}
  Let \(c\colon G\to \Gamma\) be a continuous \(1\)\nb-cocycle. We now consider the skew-product groupoid from~\cite{Renault1980A-gpd-appr-to-cst-alg}. However, for skew-product, we follow the convention of~\cite{Kaliszewski-Quigg-Raeburn-2001-Skew-prod-cross-prod-by-coaction} rather than that of~\cite[Definition I.1.6]{Renault1980A-gpd-appr-to-cst-alg}. Nevertheless, these conventions yield isomorphic groupoids (see~\cite[Remark 4.1]{Kaliszewski-Quigg-Raeburn-2001-Skew-prod-cross-prod-by-coaction}).
   
  The underlying set of the \emph{skew-product groupoid} \(G(c)\) is \(G\times \Gamma\). The groupoid operations are as follows: \((g,\gamma)\) and \((h, \eta)\) are composable if and only if \((g,h) \in G^{(2)}\) and \(\gamma = c(h)\eta\) and 
  \[
   (g,c(h)\eta)(h, \eta) = (gh, \eta), \quad (g,\gamma)^{-1} =(g^{-1}, c(g)\gamma).
  \]
  The range and source maps are given by \(r(g,\gamma) = (r(g), c(g)\gamma)\) and \(s(g, \gamma) = (s(g), \gamma)\). Note that \(G(c)\) is a locally compact Hausdorff groupoid. Moreover,  if \(G\) has a Haar system, then \(G(c)\) also has a Haar system given by 
  \[
   \int f\dd\lambda^{(u,\gamma)} = \int_{G^{u}} f(g, c(g)^{-1}\gamma) \dd\lambda^u(x).
  \]  
  If \(G\) is an \etale\ groupoid, then \(G(c)\) is also \'etale. Define a groupoid homomorphism \(\phi\colon G(c) \to G\) by \(\phi(g, \gamma) =g\) for \((g,\gamma) \in G(c)\).
  
  Let \(p\colon \A\to G\) be a Fell bundle and \(c\colon G \to \Gamma\) a continuous  \(1\)\nb-cocycle. Consider the \emph{pull-back} bundle \(q\colon \phi^*\A\to G(c)\) over \(G(c)\) along the homomorphism \(\phi\) (recall the pull-back Fell bundle from Remark~\ref{rmk-pull-back})
   \[
  \begin{tikzcd}
  	\phi^*\A \arrow[r, dashed] \arrow[d, "q"]
  	& \A \arrow[d, "p"] \\
  	G(c) \arrow[r, "\phi"]
  	&  G.
  \end{tikzcd}
  \]
  Our main result in this section (Theorem~\ref{thm-coaction-crossed-product}) establishes an isomorphism between the coaction crossed product \(\Cst(G;\A)\rtimes_{\delta} \Gamma\) and the Fell bundle algebra \(\Cst(G(c); \phi^*\A)\).
  
 \begin{theorem}\label{thm-coaction-crossed-product}
 Let \(p\colon \A\to G\) be a Fell bundle over a locally compact Hausdorff second countable groupoid equipped with a Haar system. Let \(\Gamma\) be a discrete group with a continuous \(1\)\nb-cocycle \(c\colon G\to \Gamma\). Consider the pull-back Fell bundle as described above, and the coaction~\(\delta\) as in Proposition~\ref{prop-coaction}. Then the associated coaction crossed product \(\Cst(G;\A)\rtimes_{\delta}\Gamma\) is isomorphic to \(\Cst(G(c);\phi^*\A)\).
 \end{theorem}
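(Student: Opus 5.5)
We will prove the isomorphism by building a covariant representation of \((\Cst(G;\A),\Gamma,\delta)\) inside \(M(\Cst(G(c);\phi^*\A))\), integrating it to a homomorphism \(\Phi\colon \Cst(G;\A)\rtimes_\delta\Gamma\to \Cst(G(c);\phi^*\A)\), and then constructing an inverse on \(\Cc(G(c);\phi^*\A)\) using the disintegration theorem~\cite[Theorem 4.13]{Muhly-Williams2008Equivalence-and-disinte-thm-Fell-bundle}.

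\emph{The covariant pair.} Identify sections of \(\phi^*\A\) with functions \(F\colon G\times\Gamma\to\A\) with \(F(g,\eta)\in\A_g\). Since \(\Gamma\) is discrete, \(\base[G(c)]=\base\times\Gamma\) is a disjoint union of clopen copies of \(\base\), so \(\mu(1_h)\) is the multiplier given by multiplication by the characteristic function of \(\base\times\{h\}\): \((\mu(1_h)F)(g,\eta)=[c(g)\eta=h]F(g,\eta)\), and similarly on the right. These are orthogonal projections summing strictly to \(1\), so they define a nondegenerate homomorphism \(\mu\colon c_0(\Gamma)\to M(\Cst(G(c);\phi^*\A))\). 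For \(f\in\Cc(G;\A)\), let \(\pi(f)\) be the multiplier
\[
(\pi(f)F)(g,\eta)=\int_G f(k)F(k^{-1}g,\eta)\,\dd\lambda^{r(g)}(k),
\]
i.e.\ convolution by the unbounded-support section \((g,\eta)\mapsto f(g)\). One checks on \(\Cc\) that \(\pi\) is a \(^*\)\nb-homomorphism into the adjointable operators on \(\Cst(G(c);\phi^*\A)\) viewed as a module over itself. It is \(I\)-norm bounded because the \(I\)-norm of \(F\mapsto \pi(f)F\) on each slice is governed by \(\norm{f}_I\). Hence, by disintegration, \(\pi\) extends to \(\Cst(G;\A)\), and nondegeneracy follows from an approximate identity of \(\Cc(G;\A)\). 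Covariance then reduces to checking the graded relation \(\pi(f_\gamma)\mu(1_h)=\mu(1_{\gamma h})\pi(f_\gamma)\) for \(\supp f_\gamma\subseteq G_\gamma\). This relation is equivalent to the diagram in Section~\ref{sec-cocation-dis} on the generators \(f_\gamma\), via Proposition~\ref{prop-coaction}, and it follows directly from the cocycle identity \(c(k^{-1}g)=c(k)^{-1}c(g)\). This produces \(\Phi=\pi\times\mu\).

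\emph{Surjectivity and inverse.} For \(f_\gamma\) supported in \(G_\gamma\), \(\pi(f_\gamma)\mu(1_h)\) is the section \((g,\eta)\mapsto f_\gamma(g)[\eta=h]\), which lies in \(\Cc(G(c);\phi^*\A)\). Elements of \(\Cc(G(c);\phi^*\A)\) supported in a single \(G_\gamma\times\{h\}\) are of exactly this form, and a compactly supported section meets only finitely many such clopen pieces. Therefore \(\Phi\) has dense range, and since its range is closed it is surjective. For the inverse, define \(\Psi\colon\Cc(G(c);\phi^*\A)\to \Cst(G;\A)\rtimes_\delta\Gamma\) by decomposing \(F=\sum_{\gamma,h}F_{\gamma,h}\) along these pieces and setting \(\Psi(F)=\sum j_A(F_{\gamma,h}(\cdot,h))\,j_\Gamma(1_h)\). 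Using the covariance of \((j_A,j_\Gamma)\) in the form \(j_\Gamma(1_{h'})j_A(a_\gamma)=j_A(a_\gamma)j_\Gamma(1_{\gamma^{-1}h'})\), we verify that \(\Psi\) is a \(^*\)\nb-homomorphism; the products match the groupoid multiplication \((g,c(h)\eta)(h,\eta)=(gh,\eta)\). We then show \(\Psi\) is \(I\)-norm bounded, estimating each piece by its \(\Cst(G;\A)\)-norm, which is at most its \(I\)-norm, as in the proof of Theorem~\ref{thm-Fell-bund-as-graded-alg}. By disintegration \(\Psi\) extends to \(\Cst(G(c);\phi^*\A)\). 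Finally, \(\Psi\circ\Phi\) and \(\Phi\circ\Psi\) are the identity on generators.

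\emph{Main obstacle.} The hardest step is the boundedness of \(\Psi\). A finite sum over pieces gives a bound involving the number of pieces rather than \(\norm{F}_I\), so that naive estimate is not enough. What I would try first is to compose \(\Psi\) with a faithful representation of the crossed product, which yields a pre-representation of \(\Cc(G(c);\phi^*\A)\). Continuity of this pre-representation in the inductive limit topology suffices for Muhly--Williams' disintegration theorem, and on a fixed compact set only finitely many pieces occur, so a uniform bound holds there. Care is also needed in identifying the covariance diagram with the graded relation on the generators.
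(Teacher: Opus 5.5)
Your proof is correct, but it takes a different route from the paper.

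\textbf{The paper's route.} The paper never builds a covariant pair. It invokes \cite[Theorem 3.3]{Echterhoff-Quigg-Induced-coaction-of-disctete-groups} to identify \(\Cst(G;\A)\rtimes_\delta\Gamma\) with the enveloping algebra of the Fell bundle \(\B\times\Gamma\) over the transformation groupoid \(\Gamma\ltimes\Gamma\). This uses Theorem~\ref{thm-Fell-bund-as-graded-alg} and the fact that the unit fibre of the coaction is \(\B_e\). The paper then writes down a \(^*\)-isomorphism of section algebras \((f_\gamma,\eta)\mapsto\bigl((g,\eta')\mapsto f_\gamma(g)1_{\{\eta\}}(\eta')\bigr)\). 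This is exactly your correspondence \(j_A(f_\gamma)j_\Gamma(1_h)\leftrightarrow\pi(f_\gamma)\mu(1_h)\).

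\textbf{Your route.} You work directly from the universal property of the crossed product and the disintegration theorem on \(G(c)\), in the style of Kaliszewski--Quigg--Raeburn.

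\textbf{Comparison.} The paper's argument is shorter and modular, since it hands the coaction theory to a known structure theorem. Yours is self-contained and makes the norm comparison explicit. The paper's final step (``\(\Phi\) gives a bijection between the nondegenerate representations'') in fact needs exactly that comparison. Its \(\Phi\) is only defined on sections whose values lie in \(\Cc(G_\gamma;\A|_{G_\gamma})\), not in the closures \(\B_\gamma\).

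Your treatment of the ``main obstacle'' supplies this, and it works:
\begin{itemize}
\item A compact subset of \(G(c)\) meets only finitely many clopen pieces \(G_\gamma\times\{h\}\), so \(\rho\circ\Psi\) is continuous in the inductive limit topology.
\item The range of \(\Psi\) is dense in the crossed product, so \(\rho\circ\Psi\) is nondegenerate.
\item \cite[Theorem 4.13]{Muhly-Williams2008Equivalence-and-disinte-thm-Fell-bundle} then gives \(\norm{\Psi(F)}\le\norm{F}\).
\end{itemize}

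\textbf{Point to tighten.} \(\pi(f)\) is convolution by a section that is not compactly supported, so ``the \(I\)-norm on each slice'' needs to be made precise. For finite \(S\subseteq\Gamma\), the truncation \((g,\eta)\mapsto f(g)1_S(\eta)\) lies in \(\Cc(G(c);\phi^*\A)\) and has \(I\)-norm at most \(\norm{f}_I\). For a given \(F\in\Cc(G(c);\phi^*\A)\) and \(S\) large enough, \(\pi(f)F\) is the convolution of this truncation with \(F\). This gives \(\norm{\pi(f)F}\le\norm{f}_I\norm{F}\), which is the bound you need before extending \(\pi\) to \(\Cst(G;\A)\).
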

 \begin{proof}
 	Recall the group Fell bundle \(\B\) over \(\Gamma\) from Theorem~\ref{thm-Fell-bund-as-graded-alg}. Let \(\Gamma\ltimes \Gamma\) be the groupoid  with underlying set \(\Gamma\times \Gamma\) and the operations are given by 
 	\((\gamma, \eta\zeta) (\eta, \zeta) = (\gamma\eta, \zeta)\) and \((\gamma, \eta)^{-1} = (\gamma^{-1}, \gamma\eta)\). Consider the product Fell bundle \(\B\times \Gamma\) over \(\Gamma\ltimes \Gamma\) whose fiber at \((\gamma, \eta)\) is \(\B_{\gamma}\times \{\eta\}\) and the operations on \(\B\times \Gamma\) are given by 
 	\[
 	 (f_{\gamma}, \eta \zeta)(g_{\eta}, \zeta) = (f_{\gamma}g_{\eta}, \zeta) \quad \textup{and} \quad (f_{\gamma}, \eta)^* = (f^*_{\gamma}, \gamma\eta)
 	\]
 	where \(f_{\gamma} \in \B_{\gamma}, g_{\eta} \in \B_{\eta}\) and \(\gamma, \eta, \zeta \in \Gamma\). Our next claim is to show that \(\Cst(G;\A)\rtimes_{\delta}\Gamma\) is the enveloping \(\Cst\)\nb-algebra of  \(\Cc(\Gamma\ltimes \Gamma; \B\times \Gamma)\). 
 	 Since \(\Cst(G;\A)\) is the enveloping \(\Cst\)\nb-algebra of the group Fell bundle \(\B\) (by Theorem~\ref{thm-Fell-bund-as-graded-alg}). Using~\cite[Theorem 3.3]{Echterhoff-Quigg-Induced-coaction-of-disctete-groups} it is enough to show that the unit fiber algebra \(\Cst(G;\A)_{e}\) of the Fell bundle associated to the coaction \(\delta\) is isomorphic to the \(\Cst\)\nb-algebra \(\B_e\). But note that \(\Cst(G;\A)_{e} = \{f\in \Cst(G;\A) : \delta(f)  =f\otimes e\}\) is the closure of \(\Cc(G_e;\A|_{G_e})\) inside \(\Cst(G;\A)\). Thus, Theorem~\ref{thm-isometric-isomorphism} ensures that \(\Cst(G;\A)_{e}\) is isomorphic to \(\B_e\).
 	 This proves the claim. 
 	 
 	 We now show that \(\Cst(\Gamma\ltimes \Gamma; \B\times \Gamma)\) is isomorphic to \(\Cst(G(c); \phi^*\A)\), \(\Cst\)\nb-algebra of the pull-back Fell bundle over \(G(c)\).   
 	 For \(\gamma, \eta \in \Gamma\), set 
 	 \[
 	  D_{\gamma, \eta} = \big\{f\in \Cc(G(c);\phi^*\A) : \supp(f) \subseteq G_{\gamma}\times \{\eta\}\big\}.
 	 \]
 	 Since \(\Gamma\) is discrete, the sets \(G_\gamma\times\{\eta\}\) are clopen in \(G(c)\). Every compact subset of \(G(c)\) intersects only finitely many such sets, and therefore every compactly supported section decomposes as a finite sum of elements supported in individual \(G_\gamma\times\{\eta\}\). Thus,
 	 \[
 	 \Cc(G(c);\phi^*\mathcal A)
 	 = \textup{span}\{D_{\gamma,\eta}:\gamma,\eta\in\Gamma\}.
 	 \]
      Define a linear map 
 	 \(\Phi\colon \Cc(\Gamma\ltimes \Gamma ;\B\times \Gamma)\to \Cc(G(c);\phi^*\A)\) by 
 	 \[
 	 \Phi(f,\gamma)(g,\eta) =  \begin{cases}
 	 	f(g) & \textup{if } \gamma  =\eta,\\
 	 	0 & \textup{ otherwise. }
 	 \end{cases}
 	 \]
 	 Then \(\Phi\) can be extended to a linear bijection from \(\Cc(\Gamma\ltimes \Gamma ;\B\times \Gamma)\to \Cc(G(c);\phi^*\A)\). We now show that it preserves the multiplication and involution. Fix \(\gamma, \eta, \zeta, \xi, \alpha \in \Gamma\) and \(f_{\gamma} \in \B_{\gamma}, g_{\zeta}\in \B_{\zeta}\) and \(h\in G\). Then we have 
 	 \begin{multline*}
 	 	 \Phi(f_{\gamma}, \eta)*\Phi(g_{\zeta}, \xi)(h, \alpha) \\= \int_{G(c)^{r(h, \alpha)}}  \Phi(f_{\gamma}, \eta) (k,\beta)\Phi(g_{\zeta}, \xi)((k, \beta)^{-1}(h, \alpha))\dd\lambda^{(r(h), c(h)\alpha)} (k,\beta).
 	 \end{multline*}
 	 Now \((k, \beta) \in G(c)^{r(h, \alpha)}\) if and only if \(r(k) = r(h)\) and \(c(k)\beta = c(h)\alpha\), that is, \(\beta = c(k^{-1}h)\alpha\). Thus, the last integral become
 	 \begin{equation}\label{eq-Main-thm-mult-comp-1}
 	 \int_{G^{r(h)}}  \Phi(f_{\gamma}, \eta) (k,c(k)^{-1}c(h)\alpha)\Phi(g_{\zeta}, \xi)(k^{-1}h, \alpha)\dd\lambda^{r(h)} (k).
 	\end{equation}
 	Using the definition of \(\Phi\), we have 
 	\[
 	 \Phi(f_{\gamma}, \eta) (k,c(k)^{-1}c(h)\alpha) = \begin{cases}
 	 	f_{\gamma}(k) & \textup{if } c(k)^{-1}c(h)\alpha  =\eta,\\
 	 	0 & \textup{ otherwise }
 	 	 \end{cases}
 	\]
 	and 
 	\[
 	\Phi(g_{\zeta}, \xi)(k^{-1}h, \alpha)  = \begin{cases}
 		g_{\zeta}(k^{-1}h) & \textup{if } \alpha  =\xi,\\
 		0 & \textup{ otherwise. }
 	\end{cases}
 	\]
 	Since \(\supp(f_{\gamma})\subseteq G_{\gamma}\) and \(\supp(g_{\zeta})\subseteq G_{\zeta}\), the integral in Equation~\eqref{eq-Main-thm-mult-comp-1} is nonzero only when \(c(k) = \gamma\) and \(c(k^{-1}h) = \zeta\), that is, \(c(h) =s(\zeta)\) and \(c(k)^{-1}c(h)\xi=\zeta\xi\). This implies that \(\eta =\zeta\xi\). Therefore, the integral in Equation~\eqref{eq-Main-thm-mult-comp-1} can be written as 
 	\begin{align*}
 	&\begin{cases}
 		\int_{G^{r(h)}} f_{\gamma}(k)g_{\zeta}(k^{-1}h)\dd\lambda^{r(h)}(k) & \textup{if } \alpha=\xi \textup{ and } \eta = \zeta\xi,\\
 		0 & \textup{ otherwise }
 	\end{cases}\\
 	=& \begin{cases}
 	f_{\gamma}*g_{\zeta}(h)
 	 & \textup{if } \alpha=\xi \textup{ and } \eta = \zeta\xi,\\
 		0 & \textup{ otherwise }
 	\end{cases}\\
 		=& \begin{cases}
 		\Phi(f_{\gamma}*g_{\zeta},\xi)(h,\alpha)
 		& \textup{if } \alpha=\xi \textup{ and } \eta = \zeta\xi,\\
 		0 & \textup{ otherwise }
 	\end{cases}\\
 	=&\Phi((f_{\gamma}, \eta)(g_{\zeta}, \xi))(h,\alpha).
 		\end{align*}
 		For involution, we have 
 		\begin{align*}
 		(\Phi(f_{\gamma}, \eta))^*(h,\zeta) &= \bigr(\Phi(f_\gamma, \eta)(h,\zeta)^{-1}\bigr)^* = \bigr(\Phi(f_\gamma, \eta)(h^{-1}, c(h)\zeta)\bigr)^*\\
 		&= \begin{cases}
 			\big(f_{\gamma}(h^{-1})\big)^*
 			& \textup{if } t=c(h)\zeta,\\
 			0 & \textup{ otherwise}
 		\end{cases}\\
 			&= \begin{cases}
 			f^*_{\gamma}(h)
 			& \textup{if } \gamma\eta=\zeta,\\
 			0 & \textup{ otherwise }
 		\end{cases}\\
 		&=\Phi(f^*_{\gamma}, \gamma\eta)(h,\zeta)\\
 		& = \Phi\big(f_{\gamma}, \eta\big)^*(h,\zeta).
 		\end{align*}
 		Thus, \(\Phi\) is a \(^*\)\nb-algebra isomorphism from \(\Cc(\Gamma\ltimes \Gamma ;\B\times \Gamma)\to \Cc(G(c);\phi^*\A)\). Therefore, \(\Phi\) gives a bijection between the nondegenerate representations of \(\Cc(\Gamma\ltimes \Gamma ;\B\times \Gamma)\) and \(\Cc(G(c);\phi^*\A)\) and hence \(\Phi\) can be extended to be an isomorphism \(
        \Cst(\Gamma\ltimes\Gamma;\B\times\Gamma)
 		\cong
 		\Cst(G(c);\phi^*\A)\).
 		Therefore, the coaction crossed product \(\Cst(G;\A)\rtimes_{\delta}\Gamma\) is isomorphic to \(\Cst(G(c);\phi^*\A)\). 
 \end{proof}

 \section{Applications}\label{sec-application}

  \subsection{Twisted groupoid \(\Cst\)-algebras}
  Let \(G\) be a locally compact Hausdorff second countable groupoid equipped with a Haar system and \(\sigma \in Z^2(G,\mathbb{T})\). Let \(\A = G\times \C\) equip with the product topology. Define \(p\colon \A\to G\) by \(p(g, z) =g\). Then \(\A\) is a Fell bundle, called \emph{Fell line bundle}, with respect to the operations
  \[
   (g,z)(h,w) =(gh, \sigma(g,h)zw) \quad \textup{and} \quad (g,z)^*=(g^{-1}, \overline{\sigma(g,g^{-1})z}).
  \]
Then the Fell bundle algebra \(\Cst(G;\A)\) is isomorphic to the twisted groupoid \(\Cst\)\nb-algebra \(\Cst(G,\sigma)\) (see~\cite[Lemma 4.1]{Afsar-Sims2021KMS-state-on-Cst-alg-Fell-bundle-over-gpd}). Let \(c\colon G \to \Gamma\) be a continuous \(1\)\nb-cocycle, where \(\Gamma\) is a discrete group. The restriction of \(\sigma\) on \(G_e\) gives a \(\mathbb{T}\)\nb-valued \(2\)-cocycle on \(G_e\), and the corresponding Fell line bundle over \(G_e\) can be identified with the restricted Fell bundle~\(\A|_{G_e}\). Therefore, applying Theorem~\ref{thm-isometric-isomorphism} on the Fell line bundle and identifying the Fell bundle \(\Cst\)\nb-algebras with the corresponding twisted groupoid \(\Cst\)\nb-algebras, we obtain the following corollary.
\begin{corollary}\label{coro-twisted-gpd-alg}
Let \(G\) be a locally compact Hausdorff second countable groupoid equipped with a Haar system. Let \(\sigma \in Z^2(G,\mathbb{T})\) and \(c\colon G\to \Gamma\) a continuous \(1\)\nb-cocycle. The  map \(\iota\colon  \Cc(G_e, \sigma) \to \Cc(G, \sigma)\) defined by
\[
\iota(f)(\gamma) = \begin{cases}
	f(\gamma) & \textup{if } \gamma \in G_e,\\
	0 & \textup{ otherwise }
\end{cases}
\]
extends to an injective homomorphism from \(\Cst(G_e,\sigma) \to \Cst(G,\sigma)\). 
\end{corollary}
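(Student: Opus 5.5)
The plan is to deduce Corollary~\ref{coro-twisted-gpd-alg} directly from Theorem~\ref{thm-isometric-isomorphism}, applied to the Fell line bundle \(\A = G\times\C\) built from \(\sigma\), and then to transport the conclusion along the known isomorphisms between Fell line bundle algebras and twisted groupoid algebras. Three steps need checking. First, \(\A\) is a (saturated, separable) Fell bundle over \(G\). Second, the restriction \(\A|_{G_e}\) is exactly the Fell line bundle over \(G_e\) associated with the restricted cocycle \(\sigma|_{G_e^{(2)}}\). Third, the identifications \(\Cst(G;\A)\cong \Cst(G,\sigma)\) and \(\Cst(G_e;\A|_{G_e})\cong\Cst(G_e,\sigma|_{G_e})\) from \cite[Lemma 4.1]{Afsar-Sims2021KMS-state-on-Cst-alg-Fell-bundle-over-gpd} are compatible with extension by zero.

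For the second step, note that \(G_e\) is a clopen subgroupoid, so \(G_e^{(2)} = G^{(2)}\cap (G_e\times G_e)\). The restriction of \(\sigma\) is therefore continuous and still satisfies the normalisation and cocycle identities, which makes it an element of \(Z^2(G_e,\mathbb{T})\). The multiplication and involution on \(\A|_{G_e}=G_e\times\C\) are by definition those of the Fell line bundle of \(\sigma|_{G_e}\). The Haar system on \(G_e\) is the restriction of \(\lambda\), as fixed in Section~\ref{sec-Fell-bund-as-grad}.

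For the third step, the isomorphism of \cite[Lemma 4.1]{Afsar-Sims2021KMS-state-on-Cst-alg-Fell-bundle-over-gpd} is induced at the level of compactly supported sections by \(\theta_G\colon f\mapsto (g\mapsto (g,f(g)))\), from \(\Cc(G,\sigma)\) to \(\Cc(G;\A)\). The convolution and involution formulas match the twisted ones because \((g,z)(h,w)=(gh,\sigma(g,h)zw)\). The same holds for \(\theta_{G_e}\). We then observe that \(\theta_G\circ\iota = \iota_{\A}\circ\theta_{G_e}\) on \(\Cc(G_e,\sigma)\), where \(\iota_\A\) is the extension-by-zero map of Theorem~\ref{thm-isometric-isomorphism}; both sides just extend \(f\) by zero outside \(G_e\).

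Theorem~\ref{thm-isometric-isomorphism} says that \(\iota_\A\) extends to an isometric isomorphism of \(\Cst(G_e;\A|_{G_e})\) onto \(\B_e\subseteq\Cst(G;\A)\). Composing gives \(\bar\iota = \bar\theta_G^{-1}\circ\bar\iota_\A\circ\bar\theta_{G_e}\), which is an isometric, hence injective, homomorphism \(\Cst(G_e,\sigma)\to\Cst(G,\sigma)\) extending \(\iota\).

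The only genuine point to watch is that the isomorphism of \cite[Lemma 4.1]{Afsar-Sims2021KMS-state-on-Cst-alg-Fell-bundle-over-gpd} really is the extension of \(\theta\), that is, that the universal norms agree. This holds because \(\theta\) preserves the \(I\)-norm, since \(\norm{(g,z)}=|z|\) on the fibres. It is therefore a bijection between \(I\)-norm bounded representations, so the two completions coincide. I expect this bookkeeping to be the main, though minor, obstacle.
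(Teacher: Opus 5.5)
Your proposal is correct and is essentially the paper's argument. You apply Theorem~\ref{thm-isometric-isomorphism} to the Fell line bundle \(\A=G\times\C\) of \(\sigma\), observe that \(\A|_{G_e}\) is the Fell line bundle of \(\sigma|_{G_e}\), and transport the isometric embedding along the identifications of \cite[Lemma 4.1]{Afsar-Sims2021KMS-state-on-Cst-alg-Fell-bundle-over-gpd}; your extra checks (that these identifications commute with extension by zero and come from an \(I\)-norm-preserving \(^*\)-isomorphism of the dense subalgebras) only make explicit what the paper leaves implicit.
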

\noindent Corollary~\ref{coro-twisted-gpd-alg} extends Theorem~1.1 of~\cite{Armstrong-Clark-an-Huef-Inclusion-graded-gpd} to the setting of twisted groupoid \(\Cst\)\nb-algebras. 

\begin{proposition}\label{prop-coation-twisted-ver}
Let \(G\) be a locally compact Hausdorff second countable groupoid equipped with a Haar system. Let \(\sigma \in Z^2(G,\mathbb{T})\) and let \(\Gamma\) be a discrete group with a continuous \(1\)\nb-cocycle \(c\colon G\to \Gamma\). Then there exists a coaction~\(\delta\) of \(\Gamma\) on \(\Cst(G,\sigma)\) and the associated coaction crossed product \(\Cst(G,\sigma)\rtimes_{\delta}\Gamma\) is isomorphic to \(\Cst(G(c),\sigma)\).
\end{proposition}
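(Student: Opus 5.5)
The plan is to specialize Proposition~\ref{prop-coaction} and Theorem~\ref{thm-coaction-crossed-product} to the Fell line bundle \(\A = G\times\C\) built from \(\sigma\). By \cite[Lemma 4.1]{Afsar-Sims2021KMS-state-on-Cst-alg-Fell-bundle-over-gpd}, \(\Cst(G;\A)\cong\Cst(G,\sigma)\). The isomorphism is induced by the identification \(\Cc(G;\A)\ni f\mapsto \tilde f\in\Cc(G,\sigma)\), where \(f(g)=(g,\tilde f(g))\). This identification maps sections supported in \(G_\gamma\) to functions supported in \(G_\gamma\). Transport the coaction \(\delta\) of Proposition~\ref{prop-coaction} along this isomorphism. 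This gives a coaction of \(\Gamma\) on \(\Cst(G,\sigma)\) with \(\delta(f)=f\otimes\gamma\) whenever \(\supp(f)\subseteq G_\gamma\). A coaction crossed product is functorial under equivariant isomorphisms, so \(\Cst(G,\sigma)\rtimes_\delta\Gamma\cong\Cst(G;\A)\rtimes_\delta\Gamma\). Theorem~\ref{thm-coaction-crossed-product} then identifies the latter with \(\Cst(G(c);\phi^*\A)\).

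It remains to identify \(\Cst(G(c);\phi^*\A)\) with \(\Cst(G(c),\sigma)\). Here \(\sigma\) on \(G(c)\) means the pulled-back cocycle \(\phi^*\sigma\), given by \(\phi^*\sigma((g,\gamma),(h,\eta))=\sigma(g,h)\). This is a continuous \(2\)-cocycle on \(G(c)\), because \(\phi\) is a continuous groupoid homomorphism and composable pairs in \(G(c)\) map to composable pairs in \(G\). By Remark~\ref{rmk-pull-back}, the fibre of \(\phi^*\A\) over \((g,\gamma)\) is \(\{(g,\gamma)\}\times\{g\}\times\C\). The map \(((g,\gamma),(g,z))\mapsto((g,\gamma),z)\) is then a homeomorphism \(\phi^*\A\to G(c)\times\C\). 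It intertwines the pull-back operations \(((g,\gamma),(g,z))((h,\eta),(h,w))=((gh,\eta),(gh,\sigma(g,h)zw))\) and \(((g,\gamma),(g,z))^*=((g,\gamma)^{-1},(g^{-1},\overline{\sigma(g,g^{-1})z}))\) with the Fell line bundle operations for \(\phi^*\sigma\) on \(G(c)\). So \(\phi^*\A\) is isomorphic, as a Fell bundle, to the Fell line bundle of \((G(c),\phi^*\sigma)\).

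Applying \cite[Lemma 4.1]{Afsar-Sims2021KMS-state-on-Cst-alg-Fell-bundle-over-gpd} once more, now on \(G(c)\) with the Haar system described before Theorem~\ref{thm-coaction-crossed-product}, gives \(\Cst(G(c);\phi^*\A)\cong\Cst(G(c),\phi^*\sigma)\). Composing the isomorphisms yields the claim.

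The main technical point is to check two conventions. First, the Haar system on \(G(c)\) used for \(\Cst(G(c);\phi^*\A)\) must be the one used for the twisted algebra. Second, the involution \(\xi^*(g)=\overline{\sigma(g,g^{-1})\xi(g^{-1})}\) must match the Fell-bundle involution. Both follow directly from the formulas once \(\sigma(g,g^{-1})=\sigma(g^{-1},g)\) is used, which holds by the cocycle identity and normalization. Everything else is formal bookkeeping.
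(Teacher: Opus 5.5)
Your proposal is correct and follows the same route as the paper: specialize Proposition~\ref{prop-coaction} and Theorem~\ref{thm-coaction-crossed-product} to the Fell line bundle \(\A=G\times\C\) of \(\sigma\), then identify \(\phi^*\A\) with the Fell line bundle over \(G(c)\) of the lifted cocycle \(((g,\gamma),(h,\eta))\mapsto\sigma(g,h)\). Your extra checks make explicit what the paper leaves implicit: that the identification \(\Cst(G;\A)\cong\Cst(G,\sigma)\) is equivariant, that the Haar systems agree, and that \(\sigma(g,g^{-1})=\sigma(g^{-1},g)\) is needed for the involutions to match.
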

\begin{proof}
Consider the Fell line bundle \(\A=G\times \C\) over \(G\) associate to \(\sigma\). Proposition~\ref{prop-coaction} ensures that there exists a coaction of \(\Gamma\) on \(\Cst(G;\A)\) and hence on \(\Cst(G,\sigma)\). Moreover, Theorem~\ref{thm-coaction-crossed-product} allows us to identify the coaction crossed product \(\Cst(G,\sigma)\rtimes_{\delta}\Gamma\) with \(\Cst(G(c); \phi^*\A)\), where \(\phi^*\A\) is the pull-back Fell bundle over the skew-product \(G(c)\). Note that the \(2\)\nb-cocycle \(\sigma\) can be lifted to the skew-product as follows: \(\widetilde{\sigma}\colon G(c)^{(2)}\to \mathbb{T}\) by \(\widetilde{\sigma}((g,\gamma), (h, \eta)) =\sigma(g,h)\). And the pull-back bundle \(\phi^*\A\) is a Fell line bundle as \(\phi^*\A_{(g,\gamma)} \cong \A_{\phi(g,\gamma)} = \A_g\cong \C\) and the operations are given by 
\[
((g,\gamma),z)((h,\eta), w) = ((g,\gamma)(h,\eta), \sigma(g,h)zw)
\] 
and 
\[
 ((g,\gamma),z)^*=((g, \gamma)^{-1}, \overline{\sigma(g,g^{-1})z}).
\]
Therefore, this pull-back bundle \(\phi^*\A\) can be identify the Fell line bundle over \(G(c)\) associated to the twist (2-cocycle) \(\widetilde{\sigma}\). Hence, we have 
\[
 \Cst(G,\sigma)\rtimes_{\delta}\Gamma \cong \Cst(G(c);\phi^*\A)\cong\Cst(G(c), \widetilde{
 \sigma}) =  \Cst(G(c),
 \sigma).
\]
\end{proof}
\begin{remark}
Proposition~\ref{prop-coation-twisted-ver} applies to \(\sigma=1\) recovers a result of Kaliszewski, Quigg and Raeburn in~\cite[Theorem 4.3]{Kaliszewski-Quigg-Raeburn-2001-Skew-prod-cross-prod-by-coaction}. This proposition gives a \emph{non-\'etale} version of their result.  
\end{remark}
  
\subsection{Twisted higher-rank graph \(\Cst\)-algebras}
Higher-rank graphs or \(k\)\nb-graphs are introduced by Kumjian and Pask in~\cite{Kumjian-Pask2000Higher-Rank-graph-cst-alg} as a higher dimensional generalization of Cuntz--Krieger algebras. A \emph{\(k\)-graph} is a countable category \(\Lambda\) together with a functor \(d\colon \Lambda \to \N^{k}\) satisfying the factorization property, that is, for every \(\lambda \in \Lambda\) with \(d(\lambda) = m+n\) there are unique elements \(\mu, \gamma \in \Lambda\) such that 
\[
s(\mu) =r(\gamma),\quad  d(\mu) = m, \quad d(\gamma) =n \quad \textup{and}\quad  \lambda = \mu\gamma.
\] The set of vertices is denoted by \(\Lambda^{0}\) and can be identified with \(d^{-1}(0)\). 

Let \(\Lambda^2=\{(\lambda, \mu) \in \Lambda \times \Lambda : s(\lambda) =r(\mu)\}\). A map  \(\sigma\colon \Lambda^2 \to \mathbb{T}\) is called a \(\mathbb{T}\)\nb-valued \(2\)\nb-cocycle on \(\Lambda\) if \(\sigma(r(\lambda),\lambda) =\sigma(\lambda, s(\lambda))=1\) and 
\[
 \sigma(\lambda, \mu)\sigma(\lambda\mu, \nu) = \sigma(\lambda, \mu\nu)\sigma(\mu, \nu)
\]
for all composable triples \((\lambda, \mu,\nu)\). The set of all \(\mathbb{T}\)\nb-valued \(2\)\nb-cocycle is denoted by \(Z^2(\Lambda, \mathbb{T})\). For a row finite \(k\)\nb-graph with a \(2\)\nb-cocycle \(\sigma\), the \emph{twisted \(k\)-graph \(\Cst\)\nb-algebra} will be denoted by \(\Cst(\Gamma, \sigma)\) (see~\cite[Section 5]{Kumjian-Pask-Sims2015Twisted-Higher-rank-graph-alg} for a definition of twisted \(k\)-graph algebra).

 Let \(\Omega_k = \{(m,n)\in \N^k\times \N^k : m\leq n\}\). Then \(\Omega_k\) forms a \(k\)-graph, where the structure maps are given by \(r(m,n) = (m,m), s(m,n) =(n,n)\), \((m,n)(n,k) = (m,k)\) and \(d(m,n) = n-m\). The vertex set \(\Omega^0_k\) can be identified with \(\N^k\).
 
The \emph{infinite-path space} of a \(k\)\nb-graph \(\Lambda\) is given by
\[
\Lambda^{\infty} = \{x\colon \Omega_k \to \Lambda \mid x \textup{ is a functor that intertwines the degree maps}\}.
\]
For \(l\in \N^{k}\), the shift map \(\rho^l\colon \Lambda^{\infty} \to \Lambda^{\infty}\) is defined by 
\[
\rho^l(x)(m,n) \defeq x(m+l, n+l).
\]
 Let \(\Lambda\) be a row finite \(k\)-graph with no source. Then
 \[
 	G_{\Lambda} :=\big\{(x,l,y) \in \Lambda^{\infty}\times \Z^k\times \Lambda^{\infty} : l=m-n, m,n\in \N^k
 \textup{ and } \rho^m(x) =\rho^n(y)\big\}
 \] 
is a groupoid, called \emph{infinite-path groupoid}. The structure maps are given by 
\(r(x,l,y)= x, s(x,l,y) =y, (x,l,y) (y,l^{\prime}, z) = (x, l+l^{\prime}, z)\) and \((x,l,y)^{-1} = (y,-l,x)\). 
The unit space of \(G_{\Lambda}\) can be identified with \(\Lambda^{\infty}\). For \(\lambda, \mu \in \Lambda\) satisfying \(s(\lambda) = r(\mu)\), define
 \[
 Z(\lambda,\mu) \defeq \big\{(\lambda x,\, d(\lambda) - d(\mu),\, \mu x) \in \mathcal{G}_\Lambda : x \in \Lambda^\infty \textup{ with } r(x) = s(\lambda)\big\}.
 \]
 The collection \(\{Z(\lambda,\mu) : \lambda, \mu \in \Lambda\}\) forms a basis for a locally compact Hausdorff topology on \(G_\Lambda\).  Moreover, \(G_{\Lambda}\) is an \'etale groupoid (\cite[Proposition 2.8]{Kumjian-Pask2000Higher-Rank-graph-cst-alg}).

Let \(
\Lambda{_s *_s} \Lambda := \{(\lambda,\mu) \in \Lambda \times \Lambda : s(\lambda)=s(\mu)\}.\)
Fix a subset \(\mathcal{P} \subseteq \Lambda {_s *_s} \Lambda\) such that \((\lambda,s(\lambda)) \in \mathcal{P}\) for all \(\lambda \in \Lambda\) and
\[
G_{\Lambda} = \bigsqcup_{(\lambda,\mu)\in \mathcal{P}} Z(\lambda,\mu).
\]
Such a choice is always possible by \cite[Lemma~6.6]{Kumjian-Pask-Sims2015Twisted-Higher-rank-graph-alg}. For \(\alpha \in G_{\Lambda}\), let \((\lambda_\alpha,\mu_\alpha)\) denote the unique element of \(\mathcal{P}\) with \(\alpha \in Z(\lambda_\alpha,\mu_\alpha)\), and define \(f\colon G_{\Lambda} \to \mathbb{Z}^k\) by \(f(x,n,y)=n\). Let \(\omega\) be a \(2\)-cocycle on \(\Lambda\). Then \cite[Lemma~6.3]{Kumjian-Pask-Sims2015Twisted-Higher-rank-graph-alg} ensures that for any composable pair \((\alpha,\beta) \in G^{(2)}_{\Lambda}\), there exist \(\nu,\xi,\zeta \in \Lambda\) and \(y \in \Lambda^\infty\) satisfying \(
\mu_\alpha \nu = \lambda_\beta \xi, 
\lambda_\alpha \nu = \lambda_{\alpha\beta} \zeta,
\mu_\beta \xi = \mu_{\alpha\beta} \zeta\),
and
\[
\alpha = (\lambda_\alpha \nu y,f(\alpha),\mu_\alpha \nu y),\quad
\beta = (\lambda_\beta \xi y,f(\beta),\mu_\beta \xi y),\quad
\alpha\beta = (\lambda_{\alpha\beta} \zeta y, f(\alpha\beta), \mu_{\alpha\beta} \zeta y).
\]
Then the map \(\sigma_{\omega}\colon G^{(2)}_{\Lambda} \to \mathbb{T}\) by
\begin{equation}\label{eq-2-cocycle-gpd}
\sigma_{\omega}(\alpha,\beta)
= \omega(\lambda_\alpha,\nu)\overline{\omega(\mu_\alpha,\xi)}\omega(\lambda_\beta,\xi)\overline{\omega(\mu_\beta,\xi)}\overline{\omega(\lambda_{\alpha\beta},\zeta)}\omega(\mu_{\alpha\beta},\zeta)
\end{equation}
defines a continuous \(2\)-cocycle on \(G_{\Lambda}\), independent of the choice of \(\nu,\xi,\zeta\). Moreover, by \cite[Theorem~6.5]{Kumjian-Pask-Sims2015Twisted-Higher-rank-graph-alg}, the cocycles arising from different choices of \(\mathcal{P}\) are cohomologous. And the twisted \(k\)-graph \(\Cst\)\nb-algebra \(\Cst(\Lambda, \omega)\) is isomorphic to the twisted groupoid algebra \(\Cst(G_{\Lambda}, \sigma_{\omega})\) (by~\cite[Corollary 7.8]{Kumjian-Pask-Sims2015Twisted-Higher-rank-graph-alg}).

 A \(\Gamma\)\nb-valued \(1\)\nb-cocycle \(\eta\) on \(\Lambda\) is a functor \(\eta \colon \Lambda \to \Gamma\). Then the skew-product \(k\)-graph \(\Lambda\times_{\eta}\Gamma\) is defined by \((\Lambda\times_{\eta}\Gamma)^0 = \Lambda^0\times \Gamma,\) where \(r,s\colon \Lambda\times_{\eta} \Gamma \to (\Lambda\times_{\eta} \Gamma)^0\) by \(r(\lambda, t) = (r(\lambda), t), s(\lambda,t) = (s(\lambda), \eta(\lambda)t)\) and \(d\colon \Lambda\times_{\eta} \Gamma \to \N^k\) by \(d(\lambda, t) = d(\lambda)\). The map \(\eta\) induces a \(1\)-cocycle \(c_\eta\colon G_{\Lambda} \to \Gamma\) by 
\begin{equation}\label{eq-cocycle-gpd}
c_{\eta}(x,m-n,y) = \eta(x(m,0))\eta(y(n,0))^{-1}.
\end{equation}

\begin{proposition}\label{prop-identfy-skew-prod-coaction}
	Let \(\Lambda\) be a row-finite \(k\)\nb-graph with no sources and let \(\omega \in Z^2(\Lambda,\mathbb{T})\). Let \(\eta\colon \Lambda \to \Gamma\) be a \(1\)\nb-cocycle, where \(\Gamma\) is a discrete group. Then there exists a coaction \(\delta\) of \(\Gamma\) on \(\Cst(\Lambda,\omega)\). Moreover,
	\[
	\Cst(\Lambda,\omega)\rtimes_\delta \Gamma \;\cong\; \Cst(\Lambda\times_\eta \Gamma,\widetilde{\omega}),
	\]
	where \(\widetilde{\omega}\) is the \(2\)\nb-cocycle on \(\Lambda\times_\eta \Gamma\) defined by \(
	\widetilde{\omega}((\lambda,g),(\mu,\eta(\lambda)g)) := \omega(\lambda,\mu).\)
\end{proposition}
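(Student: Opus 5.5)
The plan is to reduce everything to Proposition~\ref{prop-coation-twisted-ver} applied to the infinite-path groupoid \(G_\Lambda\), with the \(2\)-cocycle \(\sigma_\omega\) and the \(1\)-cocycle \(c_\eta\) of Equation~\eqref{eq-cocycle-gpd}.

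\textbf{Step 1: existence of the coaction.} First I would check that \(c_\eta\) is a well-defined continuous groupoid cocycle. Well-definedness means independence of the decomposition \(l=m-n\). If \(\rho^m(x)=\rho^n(y)\), then \(\rho^{m+p}(x)=\rho^{n+p}(y)\) for every \(p\in\N^k\). Since \(\eta\) is a functor, \(\eta(x(m+p,0))=\eta(x(m,0))\,\eta(x(m+p,m))\) (in the appropriate order). The extra factor \(\eta(x(m+p,m))=\eta(\rho^m(x)(p,0))\) coincides with the one coming from \(y\), so it cancels.

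Continuity holds because \(c_\eta\) is constant on the basic sets \(Z(\lambda,\mu)\), where it equals \(\eta(\lambda)\eta(\mu)^{-1}\) up to the chosen convention. The cocycle identity is a direct check.

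By~\cite[Corollary 7.8]{Kumjian-Pask-Sims2015Twisted-Higher-rank-graph-alg} we have \(\Cst(\Lambda,\omega)\cong\Cst(G_\Lambda,\sigma_\omega)\). Proposition~\ref{prop-coation-twisted-ver} (via Proposition~\ref{prop-coaction}) then gives a coaction \(\delta\) of \(\Gamma\), transported along this isomorphism. It also gives
\[
\Cst(\Lambda,\omega)\rtimes_\delta\Gamma\cong\Cst(G_\Lambda(c_\eta),\widetilde{\sigma_\omega}),
\]
where \(\widetilde{\sigma_\omega}((g,\gamma),(h,\zeta))=\sigma_\omega(g,h)\).

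\textbf{Step 2: identify the skew-product groupoid with a path groupoid.} I would construct an isomorphism of topological groupoids \(\Psi\colon G_\Lambda(c_\eta)\to G_{\Lambda\times_\eta\Gamma}\), following~\cite[Theorem 5.2]{Kumjian-Pask2000Higher-Rank-graph-cst-alg} and the untwisted argument of~\cite[Theorem 2.4]{Kaliszewski-Quigg-Raeburn-2001-Skew-prod-cross-prod-by-coaction}. An infinite path \(x\) together with \(t\in\Gamma\) lifts uniquely to a path \(\tilde x_t\) in \(\Lambda\times_\eta\Gamma\) with \(\tilde x_t(m,n)=(x(m,n),\eta(x(0,m))^{\pm1}t)\). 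One then sets
\[
\Psi((x,l,y),\gamma)=(\tilde x_{c(g)\gamma},\,l,\,\tilde y_\gamma).
\]
Some care is needed to match the skew-product conventions of both sides. The key checks are:
\begin{enumerate}
\item shift-tail equivalence lifts, i.e.\ \(\rho^m(\tilde x)=\rho^n(\tilde y)\) exactly when the \(\Gamma\)-labels agree, which is the content of the definition of \(c_\eta\);
\item \(\Psi\) maps \(Z(\lambda,\mu)\times\{\gamma\}\) onto \(Z((\lambda,\cdot),(\mu,\cdot))\), so it is a homeomorphism.
\end{enumerate}
Because both groupoids are étale with counting-measure Haar systems, \(\Psi\) preserves Haar systems, and so \(\Cst(G_\Lambda(c_\eta),\tau)\cong\Cst(G_{\Lambda\times_\eta\Gamma},\tau\circ\Psi^{-1})\) for any continuous \(2\)-cocycle \(\tau\).

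\textbf{Step 3 (main obstacle): compare the twists.} It remains to show that \(\widetilde{\sigma_\omega}\circ\Psi^{-1}\) is cohomologous to \(\sigma_{\widetilde\omega}\). Cohomologous cocycles give isomorphic twisted groupoid algebras. Then~\cite[Corollary 7.8]{Kumjian-Pask-Sims2015Twisted-Higher-rank-graph-alg} applied to \(\Lambda\times_\eta\Gamma\) finishes the proof. For this I would first check that \(\Lambda\times_\eta\Gamma\) is again row-finite with no sources, and that \(\widetilde\omega\) is a \(2\)-cocycle; both follow directly from the definitions.

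To compare the twists, choose the partition \(\mathcal P\) for \(G_\Lambda\) as in~\cite[Lemma 6.6]{Kumjian-Pask-Sims2015Twisted-Higher-rank-graph-alg}, and lift it to
\[
\widetilde{\mathcal P}=\{((\lambda,t),(\mu,t')) : (\lambda,\mu)\in\mathcal P,\ \text{compatible } t,t'\}.
\]
Under \(\Psi\), \(\widetilde{\mathcal P}\) partitions \(G_{\Lambda\times_\eta\Gamma}\) and contains all pairs \(((\lambda,t),s(\lambda,t))\). The paths \(\nu,\xi,\zeta\) in Equation~\eqref{eq-2-cocycle-gpd} likewise lift with their \(\Gamma\)-labels. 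Since \(\widetilde\omega\) evaluates \(\omega\) on the underlying paths, each factor of \(\sigma_{\widetilde\omega}\) equals the corresponding factor of \(\sigma_\omega\). This gives equality on the nose for this choice of \(\widetilde{\mathcal P}\), and~\cite[Theorem 6.5]{Kumjian-Pask-Sims2015Twisted-Higher-rank-graph-alg} handles any other choice up to cohomology.

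I expect the label bookkeeping in Steps 2–3 to be the delicate part, in particular ensuring that \(\widetilde{\mathcal P}\) is a genuine disjoint partition. Everything else is formal.
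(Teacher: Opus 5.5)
Your proposal is correct and follows essentially the same route as the paper: identify \(\Cst(\Lambda,\omega)\) with \(\Cst(G_\Lambda,\sigma_\omega)\) via the Kumjian--Pask--Sims result, apply Proposition~\ref{prop-coation-twisted-ver} to the induced cocycle \(c_\eta\), build a groupoid isomorphism \(G_\Lambda(c_\eta)\cong G_{\Lambda\times_\eta\Gamma}\) that intertwines the twists, and then apply Kumjian--Pask--Sims once more. Your additional checks fill in steps the paper only asserts with ``one checks'' and ``\(\sigma_{\widetilde\omega}\) depends only on the underlying \(\Lambda\)-paths'': well-definedness and continuity of \(c_\eta\), the homeomorphism on basic sets, and the lifted partition \(\widetilde{\mathcal P}\) that gives equality of the twists on the nose. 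Your care with range/source labels is also justified, because the paper's map \(\Phi\) puts the label at the range, which matches Renault's skew-product convention rather than the one fixed in Section~\ref{sec-coaction}.
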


\begin{proof}
	By \cite[Corollary~7.8]{Kumjian-Pask-Sims2015Twisted-Higher-rank-graph-alg}, there exists an isomorphism
	\[
	\Cst(\Lambda,\omega) \;\cong\; \Cst(G_\Lambda,\sigma_\omega),
	\]
	where \(\sigma_\omega\) is the groupoid \(2\)\nb-cocycle defined by  Equation~\eqref{eq-2-cocycle-gpd}.
	The \(1\)\nb-cocycle \(\eta\colon \Lambda \to \Gamma\) induces a continuous groupoid \(1\)-cocycle \(c_\eta\colon G_\Lambda \to \Gamma\) given by Equation~\eqref{eq-cocycle-gpd}. Hence, by Proposition~\ref{prop-coation-twisted-ver}, there exists a coaction \(\delta\) of \(\Gamma\) on \(\Cst(G_\Lambda,\sigma_\omega)\) such that
	\[
	\Cst(G_\Lambda,\sigma_\omega)\rtimes_\delta \Gamma \;\cong\; \Cst(G_\Lambda(c_\eta),\sigma_\omega),
	\]
	where \(G_\Lambda(c_\eta)\) is the skew-product groupoid.
	
	Our next claim is to establishes an isomorphism of twisted groupoid \(\Cst\)\nb-algebras
	\begin{equation}\label{eq-identifiaction-gpd-skew-prd}
	\Cst(G_\Lambda(c_\eta),\sigma_\omega) \;\cong\; \Cst(G_{\Lambda\times_\eta \Gamma},\sigma_{\widetilde{\omega}}).
	\end{equation}
	\noindent\emph{Proof of the claim.} We can identify the infinite-path space of \(\Lambda\times_{\eta}\Gamma\) with \(\Lambda^{\infty}\times \Gamma\). Under this identification, the shift map satisfies the following:
	\[
	\rho^r(x,g) = \big(\rho^r(x),\, g\,\eta(x(r,0))\big),
	\]
	for \(r\in \N^k\). Let \(((x,g),\, l,\, (y,h)) \in  G_{\Lambda\times_\eta \Gamma}\). Then \(	\rho^m(x,g)=\rho^n(y,h)\) for \(m,n\in\mathbb{N}^k\) with \(l=m-n\). This give us
	\[
	\rho^m(x)=\rho^n(y)
	\quad \text{and} \quad
	g\,\eta(x(m,0)) = h\,\eta(y(n,0)).
	\]
  The last equality gives us
	\[
	h = g\,\eta(x(m,0))\,\eta(y(n,0))^{-1}
	= g\,c_\eta(x,m-n,y).
	\]
	Therefore,
	\[
	G_{\Lambda\times_\eta \Gamma}
	=
	\Big\{((x,g),l,(y,h)) : (x,l,y)\in G_\Lambda,\ h = g\,c_\eta(x,l,y)\Big\}.
	\]
	Define \(
	\Phi: G_\Lambda(c_\eta) \to G_{\Lambda\times_\eta \Gamma}\)
	by
	\[
	\Phi((x,l,y),g)
	=
	\big((x,g),\,l,\,(y, g\,c_\eta(x,l,y))\big).
	\]
	Using the definition of \(c_\eta\), one checks that \(\Phi\) is a well-defined bijection preserving multiplication and inversion. Hence \(\Phi\) is a groupoid isomorphism.
	 Since the cocycle \(\widetilde{\omega}\) is defined by 
	\(\widetilde{\omega}((\lambda,g),(\mu,g\eta(\lambda)))=\omega(\lambda,\mu)\), the associated groupoid cocycle \(\sigma_{\widetilde{\omega}}\) depends only on the underlying \(\Lambda\)-paths. Therefore, \(\Phi\) intertwines the cocycles \(\sigma_\omega\) and \(\sigma_{\widetilde{\omega}}\) 
	and so induces an isomorphism
	\[
	\Cst(G_\Lambda(c_\eta),\sigma_\omega)
	\;\cong\;
	\Cst(G_{\Lambda\times_\eta \Gamma},\sigma_{\widetilde{\omega}}).
	\]
	This proves the claim. Combining the above identifications, we obtain
	\begin{align*}
		\Cst(\Lambda\times_\eta \Gamma,\widetilde{\omega})
		&\cong \Cst(G_{\Lambda\times_\eta \Gamma},\sigma_{\widetilde{\omega}}) \quad (\textup{by \cite[Corollary~7.8]{Kumjian-Pask-Sims2015Twisted-Higher-rank-graph-alg}})\\
		&\cong \Cst(G_\Lambda(c_\eta),\sigma_\omega) \quad \textup{ (by Equation~\eqref{eq-identifiaction-gpd-skew-prd})}\\
		&\cong \Cst(G_\Lambda,\sigma_\omega)\rtimes_\delta \Gamma \quad \textup{(by Proposition~\ref{prop-coation-twisted-ver})} \\
		&\cong \Cst(\Lambda,\omega)\rtimes_\delta \Gamma \quad \textup{(by \cite[Corollary~7.8]{Kumjian-Pask-Sims2015Twisted-Higher-rank-graph-alg})},
	\end{align*}
	which completes the proof.
\end{proof}
\begin{remark}
In particular, Proposition~\ref{prop-identfy-skew-prod-coaction} extends a result of Kaliszewski--Quigg--Raeburn~\cite[Theorem~2.4]{Kaliszewski-Quigg-Raeburn-2001-Skew-prod-cross-prod-by-coaction} from graph \(\Cst\)\nb-algebras to the broader setting of twisted higher-rank graph \(\Cst\)\nb-algebras.
\end{remark}

\paragraph{\itshape Acknowledgements:}
We thank Rohit Dilip Holkar and Ralf Meyer for fruitful discussions.


\end{document}